\theoremstyle{plain}
\theoremstyle{definition} 
\newtheorem{thm}{Theorem}[section]
\newtheorem{cor}[thm]{Corollary}
\newtheorem{lem}[thm]{Lemma}
\newtheorem{prop}[thm]{Proposition}
\theoremstyle{definition}
\newtheorem{defn}{Definition}[section]
\theoremstyle{remark}
\newtheorem{rem}{\bf Remark}[section]
\newtheorem{exm}{\bf Example}[section]
\newcommand{\be}{\begin{equation}}
\newcommand{\ee}{\end{equation}}
\newcommand{\bea}{\begin{eqnarray}}
\newcommand{\eea}{\end{eqnarray}}
\newcommand{\ben}{\begin{eqnarray*}}
	\newcommand{\een}{\end{eqnarray*}}
\newcommand{\bt}{\begin{split}}
	\newcommand{\et}{\end{split}}
\newcommand{\bet}{\begin{equation}}
\newcommand{\mc}{\mathbb{C}}
\newcommand{\mr}{\mathbb{R}}
\newcommand{\ra}{\rightarrow}
\newcommand{\beq}{\begin{equation*}}
\newcommand{\eeq}{\end{equation*}}
\newcommand{\bal}{\begin{aligned}}
\newcommand{\eal}{\end{aligned}}
\newcommand{\ddbar}{\partial \bar{\partial}}
\newcommand{\dbar}{\bar{\partial}}
\newcommand{\pa}{\partial}
\newcommand{\calO}{{\mathcal{O}}}%
\newcommand{\loc}{{\textup{loc}}}%
\newcommand{\Ker}{{\textup{Ker}}}
\newcommand{\Img}{{\textup{Im}}}
\newcommand{\Dom}{{\textup{Dom}}}
\newcommand{\pd}{\partial}%
\renewcommand{\leq}{\leqslant}%
\renewcommand{\geq}{\geqslant}%
\newcommand{\inner}[1]{\langle#1\rangle}
\newcommand{\iinner}[1]{\langle\langle#1\rangle\rangle}
\g@addto@macro{\endabstract}{\@setabstract}
\newcommand{\authorfootnotes}{\renewcommand\thefootnote{\@fnsymbol\c@footnote}}%
\begin{document}
\title[$L^2$-optimal]{A new characterization of $L^2$-domains of holomorphy with null thin complements via $L^2$-optimal conditions}

\author[Z. Liu]{Zhuo Liu}
\address{Beijing Institute of Mathematical Sciences and Applications; Department of Mathematics and Yau Mathematical Sciences Center, Tsinghua University, Beijing 101408, China.}
\email{liuzhuo@amss.ac.cn}

\author[X.Zhang]{Xujun Zhang}
\address{Institute of Mathematics, Academy of Mathematics and Systems Science, Chinese Academy of Sciences, Beijing, 100190, P. R. China}
\email{xujunzhang@amss.ac.cn}

\subjclass[2020]{32W05, 32D05}

\keywords{$L^2$-optimal, Skoda's division theorem, $L^2$ domains of holomorphy, complete K\"ahler}



\maketitle

\begin{abstract}
In this paper, 
we show that the $L^2$-optimal condition implies the $L^2$-divisibility of $L^2$-integrable holomorphic functions. 
As an application, we offer a new characterization of bounded $L^2$-domains of holomorphy with null thin complements using the $L^2$-optimal condition, 
which appears to be advantageous in addressing a problem proposed by Deng-Ning-Wang.
Through this characterization,
we show that a domain in a Stein manifold with a null thin complement, admitting an exhaustion of complete K\"ahler domains, remains Stein.
By the way, we construct an $L^2$-optimal domain that does not admit any complete K\"ahler metric. 
\end{abstract}

\section{Introduction}

The classical Levi Problem in several complex variables queries whether every domain in $\mathbb{C}^n$ with a $C^2$-smooth pseudoconvex boundary is indeed a domain of holomorphy. This was affirmatively resolved through the pioneering efforts of Oka (\cite{Oka}), Bremermann (\cite{Bremermann})
and Norguet (\cite{Norguet}). Furthermore, the work of Andreotti-Vesentini (\cite{Andreotti-Vesentini}),
Kohn (\cite{Kohn63}) and
H\"ormander (\cite{Hor65}) demonstrated that this problem can also be addressed via the $L^2$-existence theorem for the $\bar{\partial}$-equation.  
Concepts like plurisubharmonicity and Griffiths/Nakano positivity are crucial in the $L^2$ theory within several complex variables and complex geometry, leading to many significant discoveries. Plurisubharmonic functions, which are not necessarily smooth, offer substantial advantages in various problems. As for vector bundles, there is a continuous interest in investigating singular metrics with certain types of positivity.

It is well-known that a smooth Hermitian metric $h$ on a holomorphic vector bundle is Griffiths semi-positive if and only if $\log|u|_{h^*}$ is psh for any local holomorphic section $u$ of the dual bundle. This characterization naturally leads to a definition of Griffiths positivity for singular Hermitian metrics (see \cite{BP08,Rau15,PT18}), which has proven to be very useful. In \cite{DWZZ,DNW21,DNWZ22}, Deng, Ning, Wang, Zhang, Zhou established the converse $L^2$ theory by giving alternative characterizations of plurisubharmonicity and Griffiths/Nakano positivity in terms of various $L^2$-conditions for $\bar{\pd}$.
They proved that a smooth Hermitian metric is Nakano semi-positive if and only if it satisfies the ``\textit{optimal $L^2$-estimate condition}''. This characterization leads to a definition of Nakano positivity for singular Hermitian metrics (see \cite{In20}) and provides a positive answer to a question of Lempert (see \cite{LYZ21}).
Moreover, Deng-Zhang (\cite{Deng-Zhang}) gave  new characterizations of bounded pseudoconvex domains and convex domains with $C^2$-smooth boundaries.

Since Nakano positivity is stronger than Griffiths positivity, it is logical to question whether the optimal $L^2$-estimate condition implies Griffiths positivity for singular Hermitian metrics under certain regularity conditions. Specifically, when dealing with a line bundle, this question goes back to an open problem from \cite{DNW21}: "$\cdots\cdots$ it is natural to ask whether the regularity condition on $\psi$ in  Theorem 1.2 can be weakened to being continuous?"

Recall that a singular Finsler metric is Griffiths semipositive if and only if the zero section of $E^*$ admits a pseudoconvex neighborhood (\cite[Proposition 3.6.5]{kobayashi}). When $E$ is a line bundle, this pseudoconvex neighborhood can be represented as the Hartogs domain determined by the weight function over $E$. Specifically, for an upper semi-continuous function $\psi$ defined on a pseudoconvex domain $D\subset\mathbb{C}^n$, $\psi$ is plurisubharmonic
 if and only if the Hartogs domain $$D_{\psi}:=\{D \times \mc=(z,w):z \in D, |w|<e^{-\psi(z)}\}$$ is pseudoconvex (\cite[Chapter I, Example 7.4. b]{Demaily}).
It is evident that, in general, $D_{\psi}$ does not possess a $C^2$-smooth boundary. Therefore, it is meaningful to explore under what kind of regularity condition for the boundary \textit {the $L^2$-optimal domain} becomes a pseudoconvex domain.

In this note, we will demonstrate that the $L^2$-optimal condition implies the $L^2$-divisibility of $L^2$-integrable holomorphic functions. As an application, we show that any bounded $L^2$-optimal domain with a null thin complement is an $L^2$-domain of holomorphy.

Now let us recall definitions of the $L^2$-optimal condition and $L^2$-domains of holomorphy.

\begin{defn}\label{def:L^2 peroperty}
Let $D$ be a domain in $\mc^n$ and
$\psi$  a real-valued continuous function defined on $ D$.
\begin{enumerate}[(i)]
\item We say that the pair $(D,\psi)$ is \emph{$L^2$-optimal},
if for any smooth strictly plurisubharmonic function $\phi$ on $\mc^n$,
the equation $\bar\partial u=f$ can be solved on $D$ for any $\dbar$-closed $(0,1)$-form
$f \in L^2_{(0,1)}(D,\phi+\psi)$
with the estimate:
$$
\int_D|u|^2 e^{-\phi-\psi} d\lambda
\leq
\int_D \langle B_\phi^{-1} f, f\rangle e^{-\phi-\psi} d\lambda,
$$
where
$B_\phi f = \sum^{n}_{i,j=1}  \frac{\partial^2\phi}{\partial z_i \partial\bar z_ j} f_j d\bar z_i$ and $d\lambda$ is the Lebesgue measure in $\mc^n$.

\item In particular,
we say that $D$ is \emph{$L^2$-optimal} if  the pair $(D,0)$ is \emph{$L^2$-optimal}.
\end{enumerate}
\end{defn}

It follows from the $L^2$-existence theorem obtained by H\"ormander \cite{Hor65}
and Demailly \cite{Dem-82} that $(D,\psi)$ is $L^2$-optimal if $D$ is complete K\"ahler and $\psi$ is plurisubharmonic.
But the converse maybe not true. In fact, we can construct an $L^2$-optimal domain which does not admit any complete K\"ahler metric (See Example \ref{exa}).

A classical problem in several complex variables is the division problem for holomorphic functions.
In \cite{Skoda}, Skoda found a celebrated $L^2$-division theorem for holomorphic functions on pseudoconvex domains. Subsequently, there have been various generalizations of Skoda's division theorem, as seen in \cite{Skoda-2, Oh-02, Varolin}.
Building on the pioneering efforts of Skoda, we prove that the $L^2$-optimal condition implies a weakened version of the Bochner-type inequality (Lemma \ref{lem:weak-basic-inequality}). Fortunately, this is sufficient to establish $L^2$-divisibility for $L^2$-integrable holomorphic functions, which is the first result of this note.

\begin{thm}[Theorem \ref{thm:skoda-division}]\label{thm:1}
Let $D\subset\mc^n$ be a  domain and $\psi$ be an upper semi-continuous function on $D$.
	Assume that $(D,\psi)$ is $L^2$-optimal. Suppose that there exists a $g\in\calO(D)^{\oplus p}$ with $0<\inf_D|g|^2\le\sup_D|g|^2<+\infty$ and $\sup_D|\pa g|^2<+\infty$. Then for any holomorphic function $f \in L^2 (D,\psi)$, there are $p$ holomorphic functions $h_i\in L^2 (D,\psi), i=1,\cdots,p,$
such that
$$
\sum_{i=1}^{p} h_i g_i=f.
$$
\end{thm}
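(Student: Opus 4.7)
The plan is to follow Skoda's classical division strategy, with the weak basic inequality (Lemma~\ref{lem:weak-basic-inequality}) taking the role of the Bochner-Kodaira-Nakano identity under the $L^2$-optimal hypothesis.

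First, I would introduce the formal pointwise division $h^0_i := \bar g_i f/|g|^2$. The lower bound $|g|^2 \geq c > 0$ gives $h^0 \in L^2(D,\psi)^{\oplus p}$ with $\|h^0\|_\psi \leq C_1\|f\|_\psi$, and by construction $\sum_i h^0_i g_i = f$; however, $h^0$ is not holomorphic. A direct calculation yields $\alpha_i := \bar\partial h^0_i = f\cdot \bar\partial(\bar g_i/|g|^2)$, and the hypotheses $\sup_D|\partial g|^2 < \infty$ and $\inf_D|g|^2 > 0$ give the pointwise bound $|\alpha|^2 \leq C|f|^2$, hence $\alpha \in L^2_{(0,1)}(D,\psi)^{\oplus p}$. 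Differentiating $\sum_i h^0_i g_i = f$ shows $\sum_i \alpha_i g_i \equiv 0$, so $\alpha$ takes values in the holomorphic subbundle $W := \{v\in\mc^p : \sum_i v_i g_i = 0\}$.

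The problem thus reduces to solving the constrained $\bar\partial$-equation $\bar\partial v = \alpha$ in $L^2(D,\psi)^{\oplus p}$ subject to the pointwise constraint $v(z) \in W(z)$; given such a $v$, the desired holomorphic lift is $h := h^0 - v$, which automatically lies in $L^2(D,\psi)^{\oplus p}$ and satisfies $\sum_i h_i g_i = f$.

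For the constrained problem I would imitate Skoda's twisted-weight argument, using the weak basic inequality in place of Bochner-Kodaira-Nakano. The second fundamental form of $W\hookrightarrow \mc^p$ is controlled pointwise by $|\partial g|/|g|^2$, hence is uniformly bounded under our hypotheses. In the classical Skoda argument one twists the weight by $-a\log|g|^2$ with $a$ large to dominate the negative curvature contribution of the subbundle; here $\log|g|^2$ is bounded (since $|g|^2$ is pinched between positive constants), so the twist does not change the underlying $L^2$-space. Applying the weak basic inequality to this twisted Skoda setup, together with componentwise scalar solvability from the $L^2$-optimal assumption, should produce $v$ with $\|v\|_\psi \leq C'\|f\|_\psi$.

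The hard part will be this final step: verifying that the weak basic inequality --- which carries constant $1$ inherited from the $L^2$-optimal definition but is otherwise weaker than the full Bochner-Kodaira-Nakano identity --- is strong enough to reproduce Skoda's estimate. The sharpness of the constant should be precisely what is needed, which is why the $L^2$-optimal hypothesis (as opposed to a generic $L^2$-solvability with an unspecified constant) appears indispensable; but matching Skoda's explicit pointwise curvature computation against the inverse complex Hessian $B_\phi^{-1}$ appearing in the $L^2$-optimal condition is the technical heart of the proof.
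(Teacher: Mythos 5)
Your overall strategy (construct the smooth solution $h^0_i=\bar g_i f/|g|^2$, then correct by a $W$-valued solution of $\bar\partial v=\alpha$) is a genuinely different route from the paper, which never introduces the subbundle $W=\ker(g)$ at all. The paper follows Skoda's functional-analytic scheme: by Lemma \ref{lem:functional-lemma-Skoda}, division onto $F=\Ker\dbar\cap L^2$ is equivalent to the coercivity estimate $\delta\|u\|^2\leq\|T_1^*u+T_2^*v\|^2$ for holomorphic $u$ and $v\in\Dom(T_2^*)\cap(\Ker T_2^*)^\bot$. The decisive observation is that such $v$ lie in $\overline{\Img T_2}$, hence are componentwise $\dbar$-closed --- and $\dbar$-closed forms are exactly the class for which the $L^2$-optimal hypothesis yields the weak Bochner inequality of Lemma \ref{lem:weak-basic-inequality} (one substitutes $f=B_\phi\alpha=a\alpha$ into the duality estimate, which requires $B_\phi\alpha\in\Ker\dbar$). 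With $\phi=a\|z\|^2$ this gives $\|T_2^*v\|^2\geq a\|v\|^2$, the cross term $2\mathrm{Re}\iinner{T_1^*u,T_2^*v}$ is controlled by Cauchy--Schwarz using $\dbar(\bar g_i u)=u\,\dbar\bar g_i$, and the resulting negative term $-b\sup_D(|\pa g|^2/|g|^2)\|v\|^2$ is absorbed by taking $a$ large. The only use of the hypothesis is thus an estimate for the full, componentwise $\dbar^*$ tested against $\dbar$-closed scalar forms.

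This is precisely what your approach cannot supply, and the gap sits exactly where you flag "the technical heart." First, solving $\dbar v_i=\alpha_i$ componentwise does not produce a solution with $v(z)\in W(z)$: one only gets that $\rho:=\sum_i v_ig_i$ is holomorphic, so $h=h^0-v$ satisfies $\sum h_ig_i=f-\rho$, and removing $\rho$ requires dividing it by $g$ --- which is the problem you started with. Second, the genuinely constrained problem requires an a priori estimate for $\dbar_W^*=\pi_W\circ\dbar^*$, which satisfies $\|\dbar_W^*w\|\leq\|\dbar^*w\|$, i.e.\ is dominated by (not dominating) the operator the weak Bochner inequality controls; the discrepancy is the second fundamental form of $W$, and nothing in the $L^2$-optimal hypothesis --- which concerns only scalar-valued forms --- bounds it from below. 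Third, the twist by $-a\log|g|^2$ is not available here: Definition \ref{def:L^2 peroperty} only permits adding smooth strictly plurisubharmonic weights defined on all of $\mc^n$, and in the paper the positivity that absorbs the curvature term comes from $\phi=a\|z\|^2$ with $a\gg1$, not from $\log|g|^2$ (which, being pinched, indeed contributes nothing). Finally, your remark that "the sharpness of the constant is precisely what is needed" is off the mark: what the argument actually exploits is the validity of the estimate for the whole family of weights $\phi$ with the $B_\phi^{-1}$-weighted right-hand side (a uniform constant $C\neq1$ would work equally well after rescaling $a$).
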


\begin{rem}
  If $D$ is a bounded domain in $\mc^n$, then we can take $g(z)=z-z^0\in\calO(D)^{\oplus n}$ with $z^0\notin\overline{D}$. If $D\in\mc^n$ is a tube domain associated to a bounded domain $\omega\in \mr^n$, which means that $$D=\{z\in\mc^n:\Re z\in\omega\},$$ then we can take $g(z)=e^{z-z^0}-1\in\calO(D)^{\oplus n}$ with $z^0\notin\overline{D}$.
\end{rem}

\begin{defn}[\cite{PW}]
	Let $D$ be a domain in $\mc^n$.
	We say that $D$ is an $L^2$-domain of holomorphy if for any pair of open  sets $U_1, U_2\; (\subset \mc^n)$ with $\emptyset \neq U_1 \subset D \cap U_2 \neq U_2$,
	$U_2$ connected,
	there exists $f \in L^2 (D)$ such that for any $F \in \calO(U_2)$,
	we have
	$$
	f \big|_{U_1} \neq F\big|_{U_1}.
	$$
\end{defn}

It is obvious that $L^2$-domains of holomorphy are  domains of holomorphy and hence pseudoconvex. Then by  the $L^2$-existence theorem,  $L^2$-domains of holomorphy is $L^2$-optiaml. Conversely,
inspired by the work of Diederich and Pflug\cite{Diederich-Pflug},
if $D$ is a bounded domain or a tube domain associated to a bounded domain in $\mr^n$  with a \textit{null thin complement},
which means that $\mathring{\overline {D}} \backslash D=\emptyset$,
then Theorem \ref{thm:1} allows us to construct $L^2$-integrable holomorphic functions that cannot be analytically extended across the boundary. Consequently, we attain a  new characterization of $L^2$-domains of holomorphy.

\begin{thm}\label{thm:main}Let $D\in\mc^n$ be a bounded domain or a tube domain associated to a bounded domain in $\mr^n$ with a null thin complement. Then  $D$ is $L^2$-optimal if and only if $D$ is an $L^2$-domain of holomorphy.
\end{thm}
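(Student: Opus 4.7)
The easy direction of the equivalence is standard: an $L^2$-domain of holomorphy is in particular a domain of holomorphy, hence pseudoconvex, and the H\"ormander--Demailly $L^2$-existence theorem then yields $L^2$-optimality of $(D,0)$.

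For the converse, assume $D$ is $L^2$-optimal and fix a configuration $\emptyset\neq U_1\subset D\cap U_2\neq U_2$ with $U_2$ connected; the goal is to construct $f\in L^2(D)\cap\calO(D)$ whose restriction to $U_1$ matches no element of $\calO(U_2)$. First I produce a point $\zeta\in U_2\setminus\overline D$: any $p\in U_2\setminus D$ either lies in $\mc^n\setminus\overline D$ directly, or in case $p\in\pa D$ the null thin complement hypothesis $\mathring{\overline D}=D$ forces $p\notin\mathring{\overline D}$, so that the neighborhood $U_2$ of $p$ must meet $\mc^n\setminus\overline D$. I then set $g(z)=z-\zeta\in\calO(D)^{\oplus n}$ in the bounded case and $g(z)=e^{z-\zeta}-1$ in the tube case; in both cases $g(\zeta)=0$, and, using $\Re\zeta\notin\overline\omega$ and the periodicity of $|e^{w}-1|^2$ in $\Im w$ in the tube case, one verifies the hypotheses $0<\inf_D|g|^2\leq\sup_D|g|^2<\infty$ and $\sup_D|\pa g|^2<\infty$ of Theorem~\ref{thm:1}.

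Next I choose an auxiliary $f_0\in L^2(D)\cap\calO(D)$ and invoke Theorem~\ref{thm:1} to obtain $h_1,\ldots,h_n\in L^2(D)\cap\calO(D)$ with $\sum_i h_i g_i=f_0$ on $D$. In the bounded case I take $f_0\equiv 1\in L^2(D)$, whose only holomorphic extension to any connected open set is the constant $1$. In the tube case I take any nonzero $f_0\in L^2(D)\cap\calO(D)$ (the space is nontrivial because $\omega$ has nonempty interior): if $f_0|_{U_1}$ itself admits no holomorphic extension to $U_2$ I set $f:=f_0$ and the proof is complete; otherwise $f_0|_{U_1}$ extends to some $\widetilde f_0\in\calO(U_2)$, and since $\widetilde f_0\not\equiv 0$ on the connected $U_2$ its zero set is a proper analytic subset, so I refine $\zeta$ within $U_2\setminus\overline D$ to also satisfy $\widetilde f_0(\zeta)\neq 0$ before applying Theorem~\ref{thm:1}.

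The decisive step is then the claim that some $h_i|_{U_1}$ has no holomorphic extension to $U_2$; that $h_i$ is the desired $f$. Suppose for contradiction each $h_i|_{U_1}$ extends to $\widetilde h_i\in\calO(U_2)$. Then $G:=\sum_i\widetilde h_i g_i\in\calO(U_2)$ agrees with $f_0$ on the nonempty open set $U_1\subset D\cap U_2$, so by the identity theorem on the connected $U_2$, $G$ coincides with the unique holomorphic extension of $f_0|_{U_1}$: the constant $1$ in the bounded case and $\widetilde f_0$ in the tube case. Evaluating at the common zero $\zeta$ of the $g_i$ gives $G(\zeta)=\sum_i\widetilde h_i(\zeta)\cdot 0=0$, contradicting either $1\neq 0$ or $\widetilde f_0(\zeta)\neq 0$. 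The main subtlety lies in the tube case, where one must verify the hypotheses of Theorem~\ref{thm:1} for the exponential divisor $e^{z-\zeta}-1$ and handle the preliminary dichotomy on $f_0$; both rest on the null thin complement hypothesis and the richness of $L^2(D)\cap\calO(D)$.
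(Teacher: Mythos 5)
Your proposal is correct and follows the same strategy as the paper: use $\mathring{\overline D}=D$ to find $\zeta\in U_2\setminus\overline D$, apply the division theorem with $g(z)=z-\zeta$ (resp.\ $e^{z-\zeta}-1$), extend the quotients $h_i$ to $U_2$ under the contradiction hypothesis, and evaluate the resulting identity at $\zeta$. The one place you genuinely deviate is the tube case, and your variant is the more careful one. The paper divides $f\equiv 1$, which lies in the weighted space $L^2(D,a\|z\|^2)$ but \emph{not} in $L^2(D)$ when $D$ is an unbounded tube; the resulting $h_i$ are then only produced in $L^2(D,a\|z\|^2)$, so the step ``each $h_i$ extends to $U_2$'' --- which requires $h_i\in L^2(D)$ in order to invoke the failure of the $L^2$-domain-of-holomorphy property --- is not justified as written there. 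Your choice of a genuine nonzero $f_0\in L^2(D)\cap\calO(D)$, together with the dichotomy on whether $f_0|_{U_1}$ already fails to extend and the refinement of $\zeta$ off the zero set of $\widetilde f_0$, addresses exactly this point, and your justification that $L^2(D)\cap\calO(D)\neq 0$ for a tube over a bounded base is a standard Paley--Wiener-type fact. One caveat: you quote Theorem \ref{thm:1} in its unweighted form ($f\in L^2(D)\Rightarrow h_i\in L^2(D)$), whereas the proof actually given (Theorem \ref{thm:skoda-division}) only yields $h_i\in L^2(D,\phi+\psi)$ with $\phi=a\|z\|^2$; for bounded $D$ these spaces coincide and everything is airtight, but for tubes they differ, so in that case your argument inherits the same residual weighted-versus-unweighted gap that is already present in the paper's own statement of Theorem \ref{thm:1}.
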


\begin{rem}
	\begin{enumerate}
	\item In \cite{Grauert}, Grauert showed that
any bounded complete K\"ahler domain in $\mc^n$ with a real analytic smooth boundary is a domain of holomorphy.
And Ohsawa showed that this also holds when the boundary is $C^1$-smooth in \cite{Oh-80}.
	In \cite{Diederich-Pflug}, Diederich-Pflug proved that any complete K\"ahler domain $D$ with a null thin completement is an $L^2$-domain of holomorphy.
		\item The proof of Theorem \ref{thm:main} only involves strictly plurisubharmonic functions $\varphi(z)$ of the form $a\|z \|^2$ on $\mc^n$ with $a>0$.
\item Compared to the work of Laufer (\cite{Laufer}) and Laurent-Thi\'{e}baut-Shaw (\cite{Shaw}), among others, we avoid the use of cohomological methods and require only first-order solvability with an optimal $L^2$ estimate.
		\item The $L^2$ estimate condition in Definition \ref{def:L^2 peroperty} is essential since we can solve the $\dbar$ equation on the ball minus a small ball (\cite{ShawMeichi}).
	\end{enumerate}
\end{rem}

It's worth noting that if $\mathring{\overline D}=D$ and $\psi$ is continuous on $\overline D$, then the Hartogs domain $D_\psi$ satisfies $\mathring{\overline {D_\psi}}=D_{\psi}$. Consequently, a direct corollary of Theorem \ref{thm:main} is as follows:

\begin{cor}
	Let $D$ be a bounded domain in $\mc^n$ with  a null thin complement and
$\psi$ a real-valued continuous function on $\overline D$.
Then $D_{\psi}$ is $L^2$-optimal in $\mc^{n+1}$
if and only if $D$ is pseudoconvex and $\psi$ is plurisubharmonic.
\end{cor}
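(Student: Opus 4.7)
The plan is to reduce the corollary directly to Theorem \ref{thm:main} applied to the Hartogs domain $D_\psi$ in $\mc^{n+1}$, combined with the classical Demailly criterion (\cite[Chapter I, Example 7.4.b]{Demaily}) that $D_\psi$ is pseudoconvex if and only if $D$ is pseudoconvex and $\psi$ is plurisubharmonic.

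First, I would check that $D_\psi$ satisfies the hypotheses of Theorem \ref{thm:main}. Boundedness is immediate: since $\overline D$ is compact and $\psi$ is continuous there, $\psi$ is bounded below by some $-M$, so $D_\psi\subset D\times\{|w|<e^M\}$. The null thin complement property $\mathring{\overline{D_\psi}}=D_\psi$ is precisely the assertion made in the paragraph preceding the corollary; it follows from $\mathring{\overline D}=D$ together with the continuity of $\psi$ on $\overline D$, via the chain of inclusions $\mathring{\overline{D_\psi}}\subset\{(z,w): z\in\mathring{\overline D},\,|w|<e^{-\psi(z)}\}=D_\psi\subset\mathring{\overline{D_\psi}}$.

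For the easy ``if'' direction, assume $D$ is pseudoconvex and $\psi$ is plurisubharmonic. By Demailly's criterion $D_\psi$ is pseudoconvex, and since it is bounded it admits a complete K\"ahler metric. The H\"ormander--Demailly $L^2$-existence theorem recalled in the introduction then yields that $(D_\psi,0)$ is $L^2$-optimal, i.e. $D_\psi$ is $L^2$-optimal.

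For the ``only if'' direction, assume $D_\psi$ is $L^2$-optimal. Since we have just verified that $D_\psi$ is bounded with a null thin complement, Theorem \ref{thm:main} applies and shows $D_\psi$ is an $L^2$-domain of holomorphy; in particular it is a domain of holomorphy and hence pseudoconvex. Applying Demailly's equivalence in the reverse direction, pseudoconvexity of $D_\psi$ forces $D$ to be pseudoconvex and $\psi$ to be plurisubharmonic, which concludes the argument. The only step requiring any care is the identity $\mathring{\overline{D_\psi}}=D_\psi$, but this has already been flagged in the text just before the corollary and is a short continuity argument; everything else is a one-line invocation of results either proved earlier in the paper or classical.
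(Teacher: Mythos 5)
Your proposal is correct and follows exactly the route the paper intends: the paper presents this as ``a direct corollary of Theorem \ref{thm:main}'', resting precisely on the observation that $\mathring{\overline{D_\psi}}=D_\psi$ when $\mathring{\overline D}=D$ and $\psi$ is continuous on $\overline D$, plus Demailly's pseudoconvexity criterion for Hartogs domains and the H\"ormander--Demailly $L^2$-existence theorem for the converse. Nothing further is needed.
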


	 When $D$ possesses a $C^2$-smooth boundary and $\psi$ is a $C^2$-smooth function defined on $\overline D$, Deng and Zhang \cite{Deng-Zhang} showed that the pair $(D,\psi)$ is $L^2$-optimal if and only if $D$ is pseudoconvex and $\psi$ is plurisubharmonic. Their approach relies on the Morrey trick involving the boundary terms in the Bochner-Hörmander-Kohn-Morrey formula \cite{Hor65}. However, this technique fails when the $C^2$-smooth regularity condition is relaxed. In contrast, our method differs from theirs.

Based on this, a natural question arises: Does the $L^2$-optimal condition of $(D,\psi)$ imply that $D_{\psi}$ is also $L^2$-optimal? If the answer is affirmative, 
then the question posed in \cite{DNW21} can be resolved positively.

The previous discussion on the $L^2$-division for holomorphic functions and the characterization of $L^2$-domains of holomorphy can be generalized naturally for relatively compact $L^2$-optimal domains in Stein manifolds (Theorem \ref{thm:skoda-division'} and Theorem \ref{thm stein}).

In addition,
notice that the limit of an increasing sequence 
of $L^2$-optimal domains remains $L^2$-optimal (see Proposition \ref{prop closedness}), 
then by Theorem \ref{thm:main}, 
we can show that a domain in a Stein manifold, with a null thin complement, admitting an exhaustion of complete K\"ahler domains is also Stein (see Corollary \ref{cor:aaa}).

The rest of this note is organized as follows.
In \S \ref{sec:notations},
we clarify some basic notations and prove an $L^2$-division theorem (Theorem \ref{thm:1}).
In \S \ref{sec:proof of main}, we prove our main Theorem \ref{thm:main} and their generalizations on Stein manifolds.
In \S \ref{sec:Negligible-set}, we show some properties for the $L^2$-optimal condition and construct an $L^2$-optimal domain which does not admit any complete K\"ahler metric.

\textbf{Acknowledgement:}
The authors are very grateful to Professor Fusheng Deng, Professor
Xiangyu Zhou and Doctor Wang Xu for valuable suggestions on related topics.

\section{A division theorem on $L^2$-optimal domains}\label{sec:notations}
Let $D$ be a domain in $\mc^n$.
We denote by $\wedge^{0,q}(D)$ the space of  $(0,q)$-forms on $D$, for any $0\leq q\leq n$ ((0,0)-forms are just smooth functions), and
$\wedge^{0,q}_c(D)$ the elements in
$\wedge^{0,q}(D)$ with compact support.
Let $\varphi$ be a real-valued continuous function on $D$.
Given $\alpha=\sum_I\alpha_Id\bar z_I, \beta=\sum_I\beta_Id\bar z_I\in \wedge^{0,q}(D)$, we define the products of $\alpha$ and $\beta$ and the corresponding norm with respect to $\varphi$ as follows:
$$
\bal
\inner{\alpha, \beta}_{\varphi} &= \sum_I \alpha_I \cdot \overline\beta_I e^{-\varphi},
|\alpha|^2_{\varphi}=\inner{\alpha,\alpha}_{\varphi}. \\
\iinner{\alpha, \beta}_{\varphi} &=\int_D \inner{\alpha,\beta}e^{\varphi} d\lambda,
\|\alpha \|^2_{\varphi}=\iinner{\alpha,\alpha}_{\varphi}.
\eal
$$
Let $L^2_{(0,q)} (D,\varphi)$ be the completion of $\wedge^{0,q}(D)$ with respect to the inner product $\iinner{\cdot,\cdot}_{\varphi}$,
then $L^2_{(0,q)}(D,\varphi)$ is a Hilbert space and
$\dbar:L^2_{(0,q)}(D,\varphi)\ra L^2_{(0,q+1)}(D,\varphi)$ is a  closed and densely defined operator.
Let $\dbar_\varphi^{*}$ be the Hilbert adjoint of $\dbar$.
For convenience,
we denote $L^2 (D, \varphi):=L^2_{(0,0)} (D, \varphi)$.

Set $H_1=[L^2 (D, \varphi)]^{\oplus p}$,
then for any $h,h^\prime \in H_1$,
we set
$$
	\langle h,h^\prime \rangle _{\varphi} = \sum^{p} _{i=1} \langle h_i, h^\prime_i \rangle _{\varphi},\quad
	|h |^2_{\varphi}=\langle h, h \rangle _{\varphi}
$$
for any $h=(h_1,\cdots,h_p), h^\prime=(h^\prime_1,\cdots,h^\prime_p) \in H_1.$

Suppose that $g=(g_1, g_2,\cdots, g_p) \in \calO(D)^{\oplus p}$ with $0<\inf_D|g|^2\le\sup_D|g|^2<+\infty$ and $\sup_D|\pa g|^2<+\infty$.
Set $H_2=L^2(D,\varphi)$ and we define an operator $T_{1}$ as follows:
$$
\begin{aligned}
	T_{1}:H_1 &\ra H_2,
	\\
	h=(h_1,h_2,\cdots,h_p) & \ra \sum_{i=1}^{p} g_i h_i.
\end{aligned}
$$
Then $T_1$ is a continuous linear operator since $g$ is bounded on $D$.

Set
$H_3=[L^2_{(0,1)} (D, \varphi)]^{\oplus p}$, and
we define an operator $T_2$ as follows:
$$
\begin{aligned}
	T_2:H_1
	   &\ra H_3,\\
	   h=(h_1,h_2,\cdots,h_p) &\ra
	    (\dbar h_1,\dbar h_2,\cdots,\dbar h_p).
\end{aligned}
$$
By definition,
$T_1$ sends $\Ker (T_2)$ to the closed subspace
 $L^2 (D, \varphi)\cap \Ker(\dbar)$.

We consider the following situation:
\begin{equation}\label{aaa}
  \begin{matrix}
	\Ker (T_2) & \subset & H_1  & \xrightarrow{T_1} & H_2 \\
	& & \downarrow_{T_2} & & \\
	& & H_3 & &
\end{matrix}.
\end{equation}

The division problem is to
find $h \in  \Ker (T_2)$
such that
$$
T_1 h=\sum_{i=1}^{p} h_i g_i=f.
$$
 for any given $f \in L^2 (D,\varphi)\cap\Ker(\dbar)$.

Now let us recall a basic functional lemma, which can be obtained by Banach's closed-field theorem and Riesz's representation theorem.

\begin{lem}[\cite{Hor65}]
	\label{lem:Hormander-lemma}Let $T_1:H_1\to H_2$ be
	a continuous linear operator between two Hilbert spaces $H_1,H_2$.
	Assume that $F$ is a closed subspace of $H_2$ containing $\text{Im}(T_{1})$, then $\text{Im}(T_1)=F$ if and only if
	there exists $\delta>0$ such that
		$\delta \|f \|_{H_2} \leq \|T^* _1 f \|_{H_1}$
	holds for all $f \in F $.
	\end{lem}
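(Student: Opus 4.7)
The plan is to treat the two implications separately and to lean on three standard ingredients from functional analysis: the open mapping theorem, the Riesz representation theorem, and the basic duality $(\operatorname{Im} T_1)^{\perp}=\ker T_1^{*}$.

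For the forward direction, assume $\operatorname{Im}(T_1)=F$. Viewing $T_1$ as a bounded surjection $H_1 \twoheadrightarrow F$ (onto the Hilbert space $F$), the open mapping theorem yields a constant $\delta>0$ such that for every $f\in F$ there exists $h\in H_1$ with $T_1 h=f$ and $\|h\|_{H_1}\le \delta^{-1}\|f\|_{H_2}$. Then the adjoint relation gives
\[
\|f\|_{H_2}^{2}=\langle T_1 h, f\rangle_{H_2}=\langle h, T_1^{*}f\rangle_{H_1}\le \|h\|_{H_1}\|T_1^{*}f\|_{H_1}\le \delta^{-1}\|f\|_{H_2}\|T_1^{*}f\|_{H_1},
\]
whence $\delta\|f\|_{H_2}\le \|T_1^{*}f\|_{H_1}$ for all $f\in F$.

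For the converse, assume the a priori estimate. First note that the estimate forces $T_1^{*}$ to be injective on $F$: if $f\in F$ and $T_1^{*}f=0$ then $\|f\|_{H_2}\le \delta^{-1}\|T_1^{*}f\|_{H_1}=0$. The main step is to show $F\subset \operatorname{Im}(T_1)$. Fix $g\in F$ and define a linear functional
\[
L:T_1^{*}(F)\longrightarrow \mathbb{C},\qquad L(T_1^{*}f):=\langle f,g\rangle_{H_2},
\]
which is well defined by the injectivity just proved. The estimate yields
\[
|L(T_1^{*}f)|\le \|f\|_{H_2}\|g\|_{H_2}\le \delta^{-1}\|g\|_{H_2}\,\|T_1^{*}f\|_{H_1},
\]
so $L$ is bounded with norm $\le \delta^{-1}\|g\|_{H_2}$ on the subspace $T_1^{*}(F)\subset H_1$. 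Extending $L$ by continuity to its closure and by zero on the orthogonal complement (or invoking Hahn--Banach), the Riesz representation theorem produces $h\in H_1$ with $L(v)=\langle v,h\rangle_{H_1}$ for every $v\in H_1$. Unraveling the definition, for each $f\in F$ we obtain
\[
\langle f,g\rangle_{H_2}=\langle T_1^{*}f,h\rangle_{H_1}=\langle f,T_1 h\rangle_{H_2},
\]
hence $g-T_1 h\perp F$. Since $g\in F$ and $T_1 h\in \operatorname{Im}(T_1)\subset F$, their difference lies in $F$, forcing $g=T_1 h$. This proves $F\subset \operatorname{Im}(T_1)$, and combined with the standing hypothesis $\operatorname{Im}(T_1)\subset F$, we conclude $\operatorname{Im}(T_1)=F$.

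The only subtle point is the converse: one must use the hypothesis $\operatorname{Im}(T_1)\subset F$ twice, once to make sense of $L$ on $T_1^{*}(F)$ and once at the end to deduce $g-T_1 h\in F$ so that orthogonality to $F$ forces it to vanish. Without the containment $\operatorname{Im}(T_1)\subset F$ the Riesz argument would only yield $g$ in the closure of $\operatorname{Im}(T_1)$ (modulo a correction term), which is why the statement insists that $F$ be a closed subspace already containing $\operatorname{Im}(T_1)$.
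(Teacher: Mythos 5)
Your proof is correct. The paper does not actually prove this lemma --- it is stated with only a citation to H\"ormander --- so there is nothing to diverge from; your argument (open mapping theorem for the necessity of the estimate, and the Hahn--Banach/Riesz representation construction of a preimage for the sufficiency, with the containment $\operatorname{Im}(T_1)\subset F$ used exactly where you flag it) is the standard classical proof and is complete.
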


Applying Lemma \ref{lem:Hormander-lemma}, we can prove the following lemma which is owing to Skoda.

\begin{lem}[\cite{Skoda}]\label{lem:functional-lemma-Skoda}
	Let $H_1,H_2,H_3$ be Hilbert spaces,
    $T_1:H_1 \ra H_2$ be a continuous linear operator and
    $T_2:H_1 \ra H_3$ be a closed and densely defined
	operator with $T_{1}(\Ker (T_2)) \subset F$,
where $F$ is a closed subspace of $H_2$.
	Then $T_1(\Ker(T_2))=F$ if and only if
	there exists a constant $\delta >0$
        such that
		$$
        \| u \|_{H_2} \leq \delta \| T^*_{1} u+T^*_{2} v \|_{H_1}
		$$
		holds for all $ u \in  F$
		and $ v \in \Dom(T^*_2)\cap (\Ker(T_2^*))^\bot$.
\end{lem}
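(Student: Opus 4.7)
The plan is to reduce the statement to the bounded-operator case already handled by Lemma~\ref{lem:Hormander-lemma}. Set $N := \Ker(T_2)$, which is a closed subspace of $H_1$ because $T_2$ is closed, and let $\tilde T_1 : N \to H_2$ denote the restriction of $T_1$; this is a continuous linear operator whose image is precisely $T_1(\Ker(T_2)) \subset F$. Applying Lemma~\ref{lem:Hormander-lemma} to $\tilde T_1$ with closed target subspace $F \subset H_2$, the conclusion $T_1(\Ker(T_2)) = F$ becomes equivalent to the existence of a $\delta > 0$ with
$$\|u\|_{H_2} \leq \delta \|\tilde T_1^* u\|_{H_1}, \qquad u \in F,$$
where $\tilde T_1^*$ denotes the Hilbert adjoint of $\tilde T_1$ viewed as an operator from the Hilbert space $N$ to $H_2$.

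Next, I would identify $\tilde T_1^*$ with $P \circ T_1^*$, where $P : H_1 \to N$ is the orthogonal projection. Indeed, for every $h \in N$ and $u \in F$,
$$\inner{h, \tilde T_1^* u} = \inner{\tilde T_1 h, u} = \inner{T_1 h, u} = \inner{h, T_1^* u} = \inner{h, P T_1^* u},$$
so $\tilde T_1^* u = P T_1^* u$ in $N$. Decomposing $T_1^* u = P T_1^* u + (I-P)T_1^* u$ and invoking the projection theorem then yields
$$\|\tilde T_1^* u\|_{H_1} = \|P T_1^* u\|_{H_1} = \inf_{w \in N^\perp}\, \|T_1^* u - w\|_{H_1}.$$

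The next step is to rewrite $N^\perp$ using $T_2^*$. Since $T_2$ is closed and densely defined, the standard duality $\Ker(T_2)^\perp = \overline{\Img(T_2^*)}$ holds, so the infimum above equals $\inf_{v \in \Dom(T_2^*)} \|T_1^* u - T_2^* v\|_{H_1}$, or equivalently, after replacing $v$ by $-v$, $\inf_{v \in \Dom(T_2^*)} \|T_1^* u + T_2^* v\|_{H_1}$. To further restrict to $v \in \Dom(T_2^*) \cap \Ker(T_2^*)^\perp$, I would decompose any $v \in \Dom(T_2^*)$ as $v = v_1 + v_2$ with $v_1 \in \Ker(T_2^*)$ and $v_2 \in \Ker(T_2^*)^\perp$; since $\Ker(T_2^*) \subset \Dom(T_2^*)$, both $v_1$ and $v_2$ lie in $\Dom(T_2^*)$, and $T_2^* v = T_2^* v_2$, so the infimum is unchanged by this restriction.

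Chaining these equivalences, the inequality supplied by Lemma~\ref{lem:Hormander-lemma} becomes exactly the desired estimate after taking the infimum over admissible $v$, which finishes the proof. The main subtlety—and the one place I expect careful bookkeeping to be needed—lies in handling the unbounded operator $T_2^*$: specifically the identity $\Ker(T_2)^\perp = \overline{\Img(T_2^*)}$ and the orthogonal decomposition of $v \in \Dom(T_2^*)$ inside $\Dom(T_2^*)$. Both are standard facts for closed densely defined operators on Hilbert spaces, but they have to be invoked explicitly to close the equivalence.
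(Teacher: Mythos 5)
Your proposal is correct and follows essentially the same route as the paper: restrict $T_1$ to the closed subspace $\Ker(T_2)$, apply Lemma \ref{lem:Hormander-lemma}, identify the adjoint of the restriction with $\mathrm{pr}\circ T_1^*$, and convert $\|\mathrm{pr}\circ T_1^*u\|$ into the infimum over $T_2^*v$ via $(\Ker T_2)^\perp=\overline{\Img(T_2^*)}$. If anything, you are slightly more careful than the paper in justifying the final restriction to $v\in\Dom(T_2^*)\cap(\Ker T_2^*)^\perp$.
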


\begin{proof}
	Consider the linear continuous operator
	$$
    T:=T_{1} \big |_{\Ker(T_2)}: \Ker(T_2) \ra  F.
	$$
	
	It follows from Lemma \ref{lem:Hormander-lemma} that
	$\Img (T)=F$ is closed if and only if
	there exists a constant $\delta >0$
	such that
	$$
	\| u \|_{H_2} \leq \delta \| T^* u \|_{\Ker (T_2)}
	$$
	holds for any $u \in F$.
	Next we compute $\| T^* u \|_{\Ker (T_2)}$.

	Since for any $u \in \Ker(T_2), v \in F$,
	we have
	$$
	\begin{aligned}
         (Tu, v)_{F} &= (T_1u, v)_{H_2}  \\
                               &=(u, T^* _1 v)_{H_1} \\
                               &=(u, \text{pr}\circ T^*_1 v )_{H_1} \\
                                &=(u, \text{pr}\circ T^*_1 v )_{\Ker(T_2)},
	\end{aligned}
	$$
	where  $\text{pr}:H_1 \ra \Ker (T_2)$ be the projection map. Therefore, $T^*= \text{pr}\circ T^*_1$.

Hence
$$
\begin{aligned}
	\|T^* u \|_{\Ker (T_2)} &= \|\text{pr}\circ T^*_1 u \|_{\Ker (T_2)}  \\
	&= \inf_{\alpha \in (\Ker(T_2))^{\bot}}
	                           \|T^*_1 u+\alpha \|_{H_1}\\
					&= \inf_{v \in \Dom (T^*_2)} \|T^*_1 u+ T^*_2 v \|_{H_1} \quad (\text{since}~ (\Ker (T_2))^{\bot}=\overline{\text{Im}T^*_2})\\
&= \inf_{v \in \Dom (T^*_2)\cap(\Ker(T_2^*))^\bot} \|T^*_1 u+ T^*_2 v \|_{H_1}.
\end{aligned}
$$
We complete the proof.
\end{proof}

Therefore, in order to solve the division problem, it suffices to
 verify the inequality
\begin{equation}\label{eq:main-functional}
	\delta \| u\|^2_\varphi
\leq  \|T_1^*u+T_2^*v\|_{\varphi}^2
\end{equation}
for any $u \in \Ker(\dbar)\cap L^2(D,\varphi),
v \in  \Dom (T_2^*)\cap(\Ker(T_2^*))^\bot$ and
some constant $\delta >0$.

In the situation (\ref{aaa}),  we have
$$
T^* _{2} v=(\dbar^*_{\varphi} v_1,\cdots,\dbar^*_{\varphi}  v_p)
$$
for any $v\in \Dom (T^*_2)$. Since
$$
\iinner{T_1 h,u}_{\varphi}=\int_{D} \sum_{i=1}^{p}g_{i}h_{i}\overline{u}e^{-\varphi}d\lambda
	=\sum_{i=1}^{p}\int_{D}h_{i}\overline{(\bar{g_{i}} u )} e^{-\varphi}d\lambda
$$
for any $h\in H_1$, we get for any $u\in H_2$,
$$
\begin{aligned}
	T_1^* u =(\bar{g}_1 u,\bar{g}_2 u ,\cdots,\bar{g}_p u).
\end{aligned}
$$

By a direct computation, we have the following inequality:

\begin{lem}\label{lem:skoda-basic-inequality}
\begin{equation}\label{eq:Skoda-inequality}
	\bal
\| T_1^* u+ T_2^ * v\|_{\varphi}^2  \geq& \| T^*_2 v \|^2 _{\varphi} +
			(1-\frac{1}{b} )\int_D |g|^2 |u|^2 e^{-\varphi}d\lambda \\
			&-b\int_{D}|g|^{-2} \left|
			\sum^{p}_{i,k=1}
			\frac{\pa g_i }{\pa z_k} v_{ik}
		\right|^2   e^{-\varphi}  d\lambda.
\eal
\end{equation}
holds for any $b >1$, $u \in \Ker (\dbar)\cap L^2(D,\varphi),
v \in \Dom(T_2^*)\cap(\Ker(T_2^*))^\bot$,
where $v=(v_1,\cdots ,v_p), v_i=\sum^{n}_{j=1} v_{ij}d\bar z_j$.
\end{lem}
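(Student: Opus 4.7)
The plan is to expand the squared norm as $\|T_1^*u + T_2^*v\|_\varphi^2 = \|T_1^*u\|_\varphi^2 + 2\,\Re\iinner{T_1^*u,T_2^*v}_\varphi + \|T_2^*v\|_\varphi^2$, and then treat each of the three pieces separately. Using the explicit formula $T_1^*u=(\bar g_1 u,\ldots,\bar g_p u)$ derived just above, the first piece equals $\int_D|g|^2|u|^2 e^{-\varphi}d\lambda$, while the third piece already matches the $\|T_2^*v\|_\varphi^2$ term appearing in the desired bound. So everything reduces to producing a lower bound on the cross term $2\,\Re\iinner{T_1^*u,T_2^*v}_\varphi$.

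To evaluate the cross term, the boundedness hypotheses $\sup_D|g|^2<+\infty$ and $\sup_D|\pa g|^2<+\infty$ together with $u\in L^2(D,\varphi)$ ensure that each $\bar g_i u\in\Dom(\dbar)$; moreover, since $u\in\Ker(\dbar)$, the Leibniz rule yields $\dbar(\bar g_i u)=u\,\dbar\bar g_i$. Applying the adjoint relation componentwise then gives
$$
\iinner{T_1^*u,T_2^*v}_\varphi=\sum_{i=1}^p\iinner{\bar g_i u,\dbar^*_\varphi v_i}_\varphi=\sum_{i=1}^p\iinner{u\,\dbar\bar g_i,v_i}_\varphi=\int_D u\cdot\overline{\sum_{i,k}\frac{\pa g_i}{\pa z_k}v_{ik}}\,e^{-\varphi}\,d\lambda.
$$

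Finally, I would invoke the elementary weighted AM--GM inequality $2st\le\tfrac{1}{b}s^2+b\,t^2$ with $s=|g||u|$ and $t=|g|^{-1}\bigl|\sum_{i,k}\tfrac{\pa g_i}{\pa z_k}v_{ik}\bigr|$, then integrate, to obtain
$$
2\bigl|\Re\iinner{T_1^*u,T_2^*v}_\varphi\bigr|\le\frac{1}{b}\int_D|g|^2|u|^2 e^{-\varphi}d\lambda+b\int_D|g|^{-2}\Bigl|\sum_{i,k}\frac{\pa g_i}{\pa z_k}v_{ik}\Bigr|^2 e^{-\varphi}d\lambda.
$$
Combining this with the splitting of the first paragraph absorbs $\tfrac{1}{b}\int_D|g|^2|u|^2e^{-\varphi}d\lambda$ into $\|T_1^*u\|_\varphi^2$, which leaves exactly the coefficient $(1-\tfrac{1}{b})$ stated in the lemma. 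The only delicate step is the domain/adjoint manipulation in the middle display, and this is automatic from the uniform bounds on $g$ and $\pa g$; no positivity or curvature input is required, which is precisely what makes this inequality ``weaker'' than the classical Bochner--Kohn--Morrey identity used in Skoda's original argument and thus compatible with the bare $L^2$-optimal hypothesis to be exploited later.
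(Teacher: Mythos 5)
Your proposal is correct and follows essentially the same route as the paper: expand the square, identify $\|T_1^*u\|_\varphi^2=\int_D|g|^2|u|^2e^{-\varphi}d\lambda$, move $T_2^*$ onto $\bar g_i u$ (justified because $u\in\Ker(\dbar)$ and $\pa g$ is bounded, so $\bar g_i u\in\Dom(\dbar)$ with $\dbar(\bar g_i u)=u\,\dbar\bar g_i$), and then apply the weighted Cauchy--Schwarz inequality $2st\le\tfrac{1}{b}s^2+bt^2$ with the same choice of weights. No gaps; this matches the paper's argument step for step.
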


\begin{proof}

Firstly,
	$$
	\| T_1^* u+ T_2^ * v\|_{\varphi}^2=\| T_1^*u \|_{\varphi}^2
	+2\text{Re}\iinner{T_1^*u,T_2^*v}_{\varphi}+\| T_2^*v\|_{\varphi}^2,
	$$
where
$$
			\| \mathrm{T}_1^* u \|_{\varphi}^2
			=  \int_D |g|^2 |u|^2  e^{-\varphi}d\lambda
$$
and
$$
\begin{aligned}
			2\text{Re}\iinner{T_1^*u,T_2^*v }_{\varphi}
			=&2\text{Re} \iinner{\overline{g} u  ,T^*_2 v }_{\varphi}
			=2\text{Re}  \iinner{ T_2 (\overline{g} u ), v }_{\varphi} \\
			=&2\text{Re} \sum^{p}_{i=1}\iinner{ \dbar (\overline{g_i} u), v_i }_{\varphi}.
		\end{aligned}
$$
Since $u \in \Ker(\dbar)$,
we have
$$
\bal
\dbar(\overline{g_i} u ) &=\dbar (\overline{g_i}) \cdot u+\overline{g_i}\dbar u \\
&=\dbar (\overline{g_i}) \cdot u.
\eal
$$
Hence
$$
	\begin{aligned}
		2\text{Re}\iinner{T_1^*u,T_2^*v}_{\varphi}
		&=2\text{Re} \sum^{p}_{i=1}\iinner{ u \cdot \dbar (\overline{g_i}), v_i }_{\varphi}  \\
		&=2\text{Re} \int_{D} u \cdot
		\left(
			\overline{
			\sum^{p}_{i,k=1}
			\frac{\pa g_i }{\pa z_k} v_{ik}}
		\right)
		e^{-\varphi} d\lambda.
		\end{aligned}
$$
Then by the Cauchy-Schwarz inequality, for any $b >1$ we obtain
	$$
	\begin{aligned}
		 \left|2\text{Re}\left(  u
			\overline{
			\sum^{p}_{i,k=1}
			\frac{\pa g_i }{\pa z_k} v_{ik}}
			\right)
		\right|  &
		   \leq \frac{1}{b} |g|^2 |u|^2
		   + b\frac{1}{|g|^2} \left|
			\sum^{p}_{i,k=1}
			\frac{\pa g_i }{\pa z_k} v_{ik}
		\right|^2.
	\end{aligned}
	$$
	Thus
	$$
		\begin{aligned}
			2\text{Re}\iinner{T_1^*u,T_2^*v}_\varphi \geq &
			-\frac{1}{b} \int_{D}|g|^2|u|^2e^{-\varphi}d\lambda
			-b\int_{D}\frac{1}{|g|^2} \left|
			\sum^{p}_{i,k=1}
			\frac{\pa g_i }{\pa z_k} v_{ik}
		\right|^2 e^{-\varphi}d\lambda.
			\end{aligned}
	$$
   In summary,
	$$
	\bal
			\| T_1^* u+ T_2^ * v\|_{\varphi}^2  \geq & \| T^*_2 v \|^2 _{\varphi} +
			(1-\frac{1}{b} )\int_D |g|^2 |u|^2 e^{-\varphi}d\lambda \\
			&-b\int_{D}|g|^{-2} \left|
			\sum^{p}_{i,k=1}
			\frac{\pa g_i }{\pa z_k} v_{ik}
		\right|^2   e^{-\varphi}  d\lambda.
		\eal
	$$

\end{proof}

\begin{lem}\label{lem:weak-basic-inequality}
Let $D\subset\mc^n$ be a domain and $\psi$ be a real-valued continuous function on $D$.
	Assume that $(D,\psi)$ is $L^2$-optimal,
	then
	\begin{equation}\label{eq:basic-inequality-1}
		\iinner{ B_{\phi} \alpha,\alpha }_{\phi+\psi}
			\leq	
		\|\dbar^{*}_{\phi+\psi} \alpha \|^2_{\phi+\psi}.
	\end{equation}
	holds for  any $(0,1)$-form
$\alpha \in \Dom(\dbar^*_{\phi+\psi}) \cap \Ker(\dbar)$ and any smooth function $\phi$ on $\mc^n$ of the form
$\phi(z)=a\|z\|^2,\ a>0$.

\end{lem}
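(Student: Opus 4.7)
The plan is to exploit the very special form $\phi(z) = a\|z\|^2$, for which the operator $B_\phi$ degenerates to a scalar, and then to deduce the inequality as a one-line duality/Cauchy--Schwarz consequence of the solvability furnished by the $L^2$-optimal condition.

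First, I would observe that $\frac{\partial^2 \phi}{\partial z_i\,\partial\bar z_j} = a\,\delta_{ij}$, so $B_\phi\alpha = a\alpha$ for every $(0,1)$-form $\alpha$ and hence $\iinner{B_\phi\alpha,\alpha}_{\phi+\psi} = a\|\alpha\|^2_{\phi+\psi}$. Correspondingly, the $L^2$-optimal condition for $(D,\psi)$ with this choice of $\phi$ reads: for every $\dbar$-closed $f\in L^2_{(0,1)}(D,\phi+\psi)$ there exists $u\in L^2(D,\phi+\psi)$ with $\dbar u=f$ and $\|u\|^2_{\phi+\psi}\le \tfrac{1}{a}\|f\|^2_{\phi+\psi}$. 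The lemma is therefore reduced to the estimate $a\|\alpha\|^2_{\phi+\psi}\le \|\dbar^*_{\phi+\psi}\alpha\|^2_{\phi+\psi}$ for $\alpha\in\Dom(\dbar^*_{\phi+\psi})\cap\Ker(\dbar)$.

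The key move is to apply the solvability statement above with $f=\alpha$ itself. This is legitimate: $\alpha\in\Ker(\dbar)$ by hypothesis, and $\alpha\in L^2_{(0,1)}(D,\phi+\psi)$ automatically since $\alpha\in\Dom(\dbar^*_{\phi+\psi})\subset L^2_{(0,1)}(D,\phi+\psi)$. The $L^2$-optimal condition then yields $u\in L^2(D,\phi+\psi)$ with $\dbar u=\alpha$ and $\|u\|^2_{\phi+\psi}\le \tfrac{1}{a}\|\alpha\|^2_{\phi+\psi}$. Because $\dbar u=\alpha\in L^2_{(0,1)}(D,\phi+\psi)$, we have $u\in\Dom(\dbar)$, so the defining adjoint relation
$$\iinner{\dbar u,\alpha}_{\phi+\psi} \;=\; \iinner{u,\dbar^*_{\phi+\psi}\alpha}_{\phi+\psi}$$
is valid.

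Finally, chaining this with Cauchy--Schwarz gives
$$\|\alpha\|^2_{\phi+\psi} \;=\; \iinner{\dbar u,\alpha}_{\phi+\psi} \;=\; \iinner{u,\dbar^*_{\phi+\psi}\alpha}_{\phi+\psi} \;\le\; \|u\|_{\phi+\psi}\,\|\dbar^*_{\phi+\psi}\alpha\|_{\phi+\psi} \;\le\; \tfrac{1}{\sqrt{a}}\,\|\alpha\|_{\phi+\psi}\,\|\dbar^*_{\phi+\psi}\alpha\|_{\phi+\psi},$$
and (treating $\alpha=0$ separately) dividing by $\|\alpha\|_{\phi+\psi}$ and squaring yields exactly $a\|\alpha\|^2_{\phi+\psi}\le \|\dbar^*_{\phi+\psi}\alpha\|^2_{\phi+\psi}$, i.e.\ $\iinner{B_\phi\alpha,\alpha}_{\phi+\psi}\le \|\dbar^*_{\phi+\psi}\alpha\|^2_{\phi+\psi}$. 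The argument is essentially the Hahn--Banach/closed-range duality underlying H\"ormander's $L^2$ theory, specialized to the scalar Bochner operator arising from the quadratic weight $\phi=a\|z\|^2$. I do not anticipate any serious obstacle; the only point that needs verification is that $\alpha$ is an admissible datum for the $L^2$-optimal condition, and this is immediate from the hypothesis $\alpha\in\Dom(\dbar^*_{\phi+\psi})\cap\Ker(\dbar)$.
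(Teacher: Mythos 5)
Your proof is correct and follows essentially the same route as the paper's: both apply the $L^2$-optimal solvability to a scalar multiple of $\alpha$ itself (legitimate since $B_\phi\alpha=a\alpha$ stays $\dbar$-closed), then combine the adjoint identity $\iinner{\dbar u,\alpha}_{\phi+\psi}=\iinner{u,\dbar^*_{\phi+\psi}\alpha}_{\phi+\psi}$ with Cauchy--Schwarz and the optimal estimate on $\|u\|$. The only cosmetic difference is that the paper first writes the chain of inequalities for a general $f\in\Dom(\dbar^*_{\phi+\psi})\cap\Ker(\dbar)$ and then substitutes $f=B_\phi\alpha$, whereas you take $f=\alpha$ directly; the two differ by the scalar $a$ and yield the identical bound.
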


\begin{proof}
	
For any $\alpha, f \in  \Dom(\dbar^*_{\phi+\psi})
	\cap \Ker (\dbar)$,
the assumption that  $(D,\psi)$ is $L^2$-optimal implies that  there exists $u \in L^2(D,\phi+\psi)$ such that
\begin{equation}\label{eq:basic-inequality-2}
\begin{aligned}
	|\iinner{  \alpha, f}_{\phi+\psi} |^{2}
	&=|\iinner{  \alpha,\dbar u }_{\phi+\psi}|^{2}
	=|\iinner{ \dbar^{*}_{\phi+\psi} \alpha,u }_{\phi+\psi}|^{2}\\
	& \leq \|u  \|_{\phi+\psi}^{2} \cdot \|\dbar^{*}_{\phi+\psi} \alpha \|_{\phi+\psi}^{2}  \\
	& \leq  (
		\int_D \langle B_{\phi}^{-1}f,f\rangle e^{-\phi-\psi} d\lambda
	)\cdot
	\|\dbar^{*}_{\phi+\psi} \alpha \|^2_{\phi+\psi}.
\end{aligned}
\end{equation}

Now for any strictly plurisubharmonic function $\phi$ on $\mc^n$ of the form
$\phi(z)=a\|z\|^2,\ a>0,$
we have $B_{\phi} \alpha =a\alpha\in \Ker(\dbar)$ since
$\alpha \in \Ker (\dbar)$.
Then by substituting $f$ in \eqref{eq:basic-inequality-2}
with $B_{\phi} \alpha$,
we obtain that
$$
\iinner{B_{\phi} \alpha,\alpha}_{\phi+\psi}
\leq	
\|\dbar^{*}_{\phi+\psi} \alpha \|^2_{\phi+\psi}.
$$
\end{proof}

\begin{rem}
When $D$ is complete K\"ahler, \eqref{eq:basic-inequality-1} is a direct corollary of the Bochner-Kodaira-Nakano identity.
\end{rem}

\begin{thm}[Theorem \ref{thm:1}]\label{thm:skoda-division}
Let $D\subset\mc^n$ be a  domain and $\psi$ be a real-valued continuous function on $D$.
	Assume that $(D,\psi)$ is $L^2$-optimal. Suppose that $g\in\calO(D)^{\oplus p}$ with $0<\inf_D|g|^2\le\sup_D|g|^2<+\infty$ and $\sup_D|\pa g|^2<+\infty$. Then there is a smooth function $\phi$ on $\mc^n$ of the form
$\phi(z)=a\|z\|^2,\ a>>1$  and a constant $\delta>0$ such that
\begin{equation}\label{eq:Skoda-division}
	\bal
\| T_1^* u+ T_2^ * v\|_{\phi+\psi}^2  \geq&\delta \|u\|_{\phi+\psi}^2
\eal
\end{equation}
 for all $u \in \Ker (\dbar) \cap \Dom(T_1^*)$ and
$v \in \Dom(T_2^*)\cap(\Ker(T_2^*))^\bot$. In particular,  for any holomorphic function $f \in L^2 (D,\phi+\psi)$, there are $p$ holomorphic functions $h_i\in L^2 (D,\phi+\psi), i=1,\cdots,p,$
such that
$$
\sum_{i=1}^{p} h_i g_i=f.
$$
\end{thm}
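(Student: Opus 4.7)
The plan is to establish the quantitative inequality \eqref{eq:Skoda-division}; once this is in hand, the division conclusion is immediate from Lemma \ref{lem:functional-lemma-Skoda} applied to the closed subspace $F=L^2(D,\phi+\psi)\cap\Ker(\bar\partial)$, which contains every holomorphic $f\in L^2(D,\phi+\psi)$. To prove the inequality, I would start from Lemma \ref{lem:skoda-basic-inequality} with weight $\varphi=\phi+\psi$ and estimate its final ``error'' term by Cauchy--Schwarz together with the hypotheses $\sup_D|\partial g|^2<\infty$ and $\inf_D|g|^2>0$, obtaining
\[
\|T_1^*u+T_2^*v\|_{\phi+\psi}^2 \geq \|T_2^*v\|^2_{\phi+\psi} + (1-\tfrac{1}{b})\inf_D|g|^2\cdot\|u\|^2_{\phi+\psi} - bC\|v\|^2_{\phi+\psi},
\]
where $C:=\sup_D|\partial g|^2/\inf_D|g|^2<\infty$. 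The remaining task is to absorb the $\|v\|^2_{\phi+\psi}$ error into the positive term $\|T_2^*v\|^2_{\phi+\psi}$.

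The key step, and the only place where the $L^2$-optimality hypothesis enters, is a bound of the form $a\|v\|^2_{\phi+\psi}\leq\|T_2^*v\|^2_{\phi+\psi}$ when $\phi(z)=a\|z\|^2$. I would first observe that the assumption $v\in(\Ker T_2^*)^\perp=\overline{\Img T_2}$, combined with $T_2=\bar\partial^{\oplus p}$ and $\bar\partial^2=0$, forces each component $v_i$ into $\Ker(\bar\partial)\cap\Dom(\bar\partial^*_{\phi+\psi})$. With $B_\phi=a\cdot\Id$, Lemma \ref{lem:weak-basic-inequality} applied componentwise yields $a\|v_i\|^2_{\phi+\psi}\leq\|\bar\partial^*_{\phi+\psi}v_i\|^2_{\phi+\psi}$ for each $i$; summing over $i$ gives the required bound.

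Combining the two estimates, fix any $b>1$ and then choose $a>bC$: the coefficient of $\|T_2^*v\|^2_{\phi+\psi}$ in the combined inequality becomes nonnegative and may be discarded, leaving \eqref{eq:Skoda-division} with $\delta=(1-\tfrac{1}{b})\inf_D|g|^2>0$. The main conceptual obstacle is that Lemma \ref{lem:weak-basic-inequality} is only a \emph{weak} substitute for the full Bochner--Kodaira--Nakano identity underlying Skoda's original argument: it applies only on $\bar\partial$-closed forms, which is precisely the class singled out by the orthogonality condition on $v$. The price of this restriction is that the weight must be enlarged from $\psi$ to $\phi+\psi$, and the slope $a$ must be taken large enough to dominate the universal constant $C$ controlled by the given bounds on $g$ and $\partial g$; this is the structural reason the theorem carries the extra strictly plurisubharmonic weight $\phi$ and requires the boundedness hypotheses on $|g|$ and $|\partial g|$.
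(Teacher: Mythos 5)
Your proposal is correct and follows essentially the same route as the paper's proof: Lemma \ref{lem:skoda-basic-inequality} with weight $\phi+\psi$, the observation that $v\in(\Ker T_2^*)^{\bot}=\overline{\Img(T_2)}$ forces each $v_i$ to be $\dbar$-closed so that Lemma \ref{lem:weak-basic-inequality} gives $a\|v\|^2_{\phi+\psi}\leq\|T_2^*v\|^2_{\phi+\psi}$, followed by choosing $a$ large enough to absorb the error term and invoking Lemma \ref{lem:functional-lemma-Skoda}. The only difference is an inessential constant (the paper uses $a=bp^2\sup_D|\pa g|^2/|g|^2$ where you use a slightly sharper bound), which does not affect the argument.
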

\begin{proof}
Since $(\Ker(T_2^*))^\bot=\overline{\Img(T_2)}$, then for any $v=(v_1,\cdots ,v_p) \in \Dom(T_2^*)\cap(\Ker(T_2^*))^\bot$, $v_i=\sum^{n}_{j=1} v_{ij}d\bar z_j$ is $\dbar$-closed $(0,1)$-form.  Then  for  any function $\phi$ on $\mc^n$ of the form
$\phi(z)=a\|z\|^2,\ a>0,$ and any $v=(v_1,\cdots ,v_p) \in \Dom(T_2^*)\cap(\Ker(T_2^*))^\bot$, it follows from Lemma \ref{lem:weak-basic-inequality} that
\begin{align*}
 \| T^*_2 v \|^2 _{\phi+\psi}= & \sum_{i=1}^{p}\| \dbar^*_{\phi+\psi} v_i \|^2 _{\phi+\psi} \\
    \ge & \sum_{i=1}^{p} \iinner{B_{\phi} v_i,v_i}_{\phi+\psi}\\
     = & \sum_{i=1}^{p}a\|v_i\|^2_{\phi+\psi}=a\|v\|^2_{\phi+\psi}.
 \end{align*}
 Then by  Lemma \ref{lem:skoda-basic-inequality}, we obtain that
 $$
	\bal
			\| T_1^* u+ T_2^ * v\|_{\phi+\psi}^2  \geq & \| T^*_2 v \|^2 _{\phi+\psi} +
			(1-\frac{1}{b} )\int_D |g|^2 |u|^2 e^{-\phi-\psi}d\lambda \\
			&-b\int_{D}|g|^{-2} \left|
			\sum^{p}_{i,k=1}
			\frac{\pa g_i }{\pa z_k} v_{ik}
		\right|^2   e^{-\phi-\psi}  d\lambda\\
 \geq & a\|v\|^2_{\phi+\psi} +
			(1-\frac{1}{b} )\inf_D|g|^2\|u\|^2_{\phi+\psi} -bp^2\sup_D\frac{|\partial g|^2}{|g|^{2}}\|v\|^2_{\phi+\psi}\\
\geq & (1-\frac{1}{b} )\inf_D|g|^2\|u\|^2_{\phi+\psi},
		\eal
	$$
where we can take $a=bp^2\sup_D\frac{|\pa g|^2}{|g|^{2}}<+\infty$ and $\delta= (1-\frac{1}{b} )\inf_D|g|^2>0$.
\end{proof}

\section{A new characterization of $L^2$-domain of holomorphy}
\label{sec:proof of main}
In this section, we will give a proof of Theorem \ref{thm:main} by using Theorem \ref{thm:skoda-division} and generalize these results to $L^2$-optimal domains in Stein manifolds.

\begin{thm}[Theorem \ref{thm:main}]
	Let $D$ be a bounded domain or a tube domain associated to a bounded domain in $\mr^n$ in $\mc^n$ with a null thin complement. Assume that $D$ is $L^2$-optimal,
then $D$ is an $L^2$-domain of holomorphy.
\end{thm}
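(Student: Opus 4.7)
The plan is to argue by contradiction using the $L^2$-division theorem just proved. Assume that $D$ is $L^2$-optimal yet fails to be an $L^2$-domain of holomorphy. Then by the definition there exist open sets $U_1,U_2\subset\mc^n$ with $\emptyset\neq U_1\subset D\cap U_2\neq U_2$, $U_2$ connected, such that every $f\in L^2(D)\cap\calO(D)$ admits an extension $F_f\in\calO(U_2)$ with $F_f|_{U_1}=f|_{U_1}$. The first step is to locate a point $z^0\in U_2\setminus\overline{D}$: since $U_1\subset U_2$ is nonempty but $U_2\not\subset D$, connectedness of $U_2$ forces $U_2\cap\pa D\neq\emptyset$, and the null thin complement condition $\mathring{\overline D}=D$ implies $\pa D=\pa\overline D$, so any boundary point of $D$ in $U_2$ is an accumulation point of $\mc^n\setminus\overline D$; thus such a $z^0$ exists.

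Next I apply Theorem \ref{thm:skoda-division} with the vector-valued holomorphic function $g(z)=z-z^0$ in the bounded case, and $g(z)=e^{z-z^0}-1$ in the tube case. The remark following Theorem \ref{thm:1} verifies, in both cases, the hypotheses $0<\inf_D|g|^2\leq\sup_D|g|^2<+\infty$ and $\sup_D|\pa g|^2<+\infty$, and one has $g(z^0)=0$. Taking $\psi\equiv 0$ and letting $\phi(z)=a\|z\|^2$ be the weight produced by the theorem, I apply the division to $f\equiv 1$ (which lies in $L^2(D,\phi)$ trivially when $D$ is bounded, and because $\int_D e^{-a\|z\|^2}\,d\lambda<+\infty$ when $D$ is a tube over a bounded real base), obtaining holomorphic $h_1,\dots,h_n\in L^2(D,\phi)$ with $\sum_{i=1}^n g_i(z)h_i(z)=1$ on $D$.

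In the bounded case $L^2(D,\phi)$ coincides with $L^2(D)$, so each $h_i$ lies in $L^2(D)\cap\calO(D)$ and admits an extension $H_i\in\calO(U_2)$ with $H_i|_{U_1}=h_i|_{U_1}$. Then $\Phi(z):=\sum_{i=1}^n g_i(z)H_i(z)\in\calO(U_2)$ agrees with $1$ on $U_1$, and since the constant $1\in\calO(U_2)$ is also an extension of $1|_{U_1}$, the identity principle on the connected open set $U_2$ yields $\Phi\equiv 1$. But $\Phi(z^0)=\sum_i g_i(z^0)H_i(z^0)=0$, contradicting $\Phi(z^0)=1$. The main obstacle I anticipate is the tube case: there $L^2(D,\phi)$ strictly contains $L^2(D)$, so the $h_i$ furnished by the division theorem are not a priori in the unweighted $L^2$ space to which the extension hypothesis applies. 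I would handle this by sharpening the input from $f\equiv 1$ to some $f\in L^2(D)\cap\calO(D)$ whose extension has a prescribed nonzero value at $z^0$ (for instance via Bergman-type reproducing kernels), exploiting the uniform lower bound $|g|\geq c>0$ on $D$ to propagate unweighted $L^2$-integrability from $f$ to the $h_i$, and then running the same contradiction at $z^0$.
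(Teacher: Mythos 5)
Your argument is essentially the paper's: argue by contradiction, locate $z^0\in U_2\setminus\overline D$ using $\mathring{\overline D}=D$ (if $U_2\subset\overline D$ then $U_2\subset\mathring{\overline D}=D$, contradiction), divide $f\equiv 1$ by $g(z)=z-z^0$ (resp. $e^{z-z^0}-1$) via Theorem \ref{thm:skoda-division}, extend the quotients $h_i$ to $H_i\in\calO(U_2)$, and evaluate $\sum g_iH_i\equiv 1$ at $z^0$ where $g$ vanishes. In the bounded case your proof is complete and correct, since $e^{-a\|z\|^2}$ is bounded above and below on $D$, so $L^2(D,a\|z\|^2)=L^2(D)$.

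For the tube case you have put your finger on the one genuine subtlety --- and it is worth noting that the paper's own proof passes over it silently, applying the extension hypothesis to $h_i\in L^2(D,a\|z\|^2)$ although the definition of an $L^2$-domain of holomorphy only guarantees extensions of \emph{unweighted} $L^2$ holomorphic functions. However, your proposed repair does not work as stated: Theorem \ref{thm:skoda-division} controls the $h_i$ only through the weighted estimate $\|h\|^2_{\phi+\psi}\leq\delta^{-1}\|f\|^2_{\phi+\psi}$, and the pointwise lower bound $|g|\geq c$ constrains only the combination $\sum h_ig_i$, not the individual components $h_i$ of the division; so choosing a better $f\in L^2(D)$ does not by itself force $h_i\in L^2(D)$. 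A clean fix is instead to multiply the already obtained identity $\sum h_ig_i\equiv 1$ by the entire function $G(z)=e^{a\sum_j(z_j-z_j^0)^2}$: on the tube one has $|G(z)|^2e^{a\|z\|^2}\leq C$ because $\|\Re z\|$ is bounded and $\Re\,(z_j-z_j^0)^2=(\Re z_j-\Re z_j^0)^2-(\Im z_j)^2$, hence $h_iG\in L^2(D)\cap\calO(D)$, while $\sum (h_iG)g_i=G$ with $G(z^0)=1\neq 0$; extending the $h_iG$ to $U_2$ and applying the identity theorem then yields the same contradiction $0=G(z^0)=1$. With this modification (or any equivalent one) your argument is correct and, in fact, repairs a gap in the published proof of the tube case.
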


\begin{proof}Suppose that  $D$ is not an $L^2$-domain of holomorphy. Then by the definition,  there is a connected  open set $U_2$ with $U_2 \cap D\neq\emptyset$ and $U_2 \not\subset D$ and a open subset $U_1\subset U_2\cap D$ such that for every holomorphic function $f\in L^2(D)$, there is an $F\in \calO(U_2)$ satisfying $F=f$ on $U_1$.

Since  $\mathring{\overline {D}}=D$, we obtain that $U_2 \not\subset \overline {D}$. Thus for some $z^0\in U_2 \backslash \overline {D}$, in Theorem \ref{thm:skoda-division}, we can take $g(z)=z-z^0$ when $D$ is bounded (or $g(z)=e^{z-z^0}-1$ when $D$ is a tube domain associated to a bounded domain) . Then $0<\inf_D|g|^2\le\sup_D|g|^2<+\infty$ and $\sup_D|\pa g|^2<+\infty$. Notice that $f\equiv1\in L^2(D,a|z|^2)$ for $a$ large enough. Since $D$ is $L^2$-optimal, then by Theorem \ref{thm:skoda-division}, there are $n$ holomorphic functions $h_i\in L^2 (D,a|z|^2), i=1,\cdots,n,$
such that on $D$,
$$
\sum_{i=1}^{n} h_i(z)(z_i-z^0_i) \equiv1.
$$
 Then there exist $H_i\in\calO(U_2), i=1,\cdots,n,$ such that $$H_i\big|_{U_1}=h_i\big|_{U_1}.$$
Then on $U_1$ we have
$$
\sum_{i=1}^{n} H_i(z)(z_i-z^0_i) \equiv1.
$$
Hence by the uniqueness theorem for holomorphic functions, we obtain $\sum_{i=1}^{n} H_i(z)(z_i-z^0_i) \equiv1$
on $U_2$.
In particular, $\sum_{i=1}^{n} H_i(z)(z_i-z^0_i) =1$ at $z^0\in U_2$, which is a contradiction.

\end{proof}

Now, let us recall the notations for the $\dbar$-equation on complex manifolds and generalize the above results on Stein manifolds. 

Let $(X,\omega)$ be a K\"ahler manifold of dimension $n$ and $L$ be a Hermitian holomorphic line bundle endowed with a continuous metric $e^{-\varphi}$ over $X$.
Let $|\cdot|_{\omega,\varphi}$ and $\inner{\cdot,\cdot}_{\omega,\varphi}$ denote the norm and the inner product on $\wedge^{p,q}T^*M\otimes L$ that induced by $\omega$ and $e^{-\varphi}$.
$dV_\omega:=\frac{\omega^n}{n!}$ denotes the volume form induced by $\omega$.
Let $\wedge_c^{p,q}(X,L)$ be the space of all compactly supported smooth $L$-valued $(p,q)$-forms on $X$,
and $L_{p,q}^2(X,L;\loc)$ be the space of all  $L$-valued $(p,q)$-forms on $X$ whose coefficients are $L_\loc^2$ functions. Define
$$
	L_{p,q}^2(X,L;\omega,\varphi) := \left\{f\in L_{p,q}^2(X,L;\loc): \int_X |f|_{\omega,\varphi}^2dV_\omega <+\infty \right\},
$$
and
$$
\iinner{\cdot, \cdot }_{\omega, \varphi}= \int_{X}\inner{\cdot,\cdot}_{\omega,\varphi} dV_\omega
$$

Then $L_{p,q}^2(X,L;\omega,\varphi) $ is a Hilbert space containing $\wedge_c^{p,q}(X,L)$ as a dense subspace, and $\dbar: L_{p,q}^2(X,L;\omega,\varphi)  \rightarrow L_{p,q+1}^2(X,L;\omega,\varphi) $ is a  closed and densely defined operator.

Let $D$ be a relatively compact domain in $X$.
Set $H_1=[L_{(n,0)}^2(D,L;\omega,\varphi)]^{\oplus p}$,
then for any $h,h'\in H_1$,
we set
$$
 \iinner{	h, h' }_{\omega,\varphi} = \sum^{p} _{i=1} \iinner{ h_i, h'_i }_{\omega,\varphi},\quad
	\|h \|^2_{\omega,\varphi}=\iinner{ h, h } _{\omega,\varphi}
$$
for any $h=(h_1,\cdots,h_p), h'=(h'_1,\cdots,h'_p) \in H_1.$

Take $g=(g_1, g_2,\cdots, g_p) \in \calO(D)^{\oplus p}$ with $0<\inf_D|g|^2\le\sup_D|g|^2<+\infty$ and $\sup_D|\pa g|^2<+\infty$.
Set $H_2=L^2_{(n,0)}(D,L;\omega,\varphi)$.
We define an operator $T_{1}$ as follows:
$$
\begin{aligned}
	T_{1}:H_1 &\ra H_2,
	\\
	h=(h_1,h_2,\cdots,h_p) & \ra \sum_{i=1}^{p} g_i h_i.
\end{aligned}
$$
Then $T_1$ is a continuous linear operator since $g$ is bounded on $D$.

Set
$H_3=[L^2_{(n,1)} (D,L;\omega, \varphi)]^{\oplus p}$, and
we define an operator $T_2$ as follows:
$$
\begin{aligned}
	T_2:H_1
	   &\ra H_3,\\
	   h=(h_1,h_2,\cdots,h_p) &\ra
	    (\dbar h_1,\dbar h_2,\cdots,\dbar h_p).
\end{aligned}
$$
By definition,
$T_1$ sends $\Ker (T_2)$ to the closed subspace
 $L^2_{(n,0)} (D;\omega, \varphi)\cap \Ker(\dbar)$.
And since  for any $h\in H_1$,
$$
\iinner{T_1 h,u}_{\omega,\varphi}=\int_{D} \sum_{i=1}^{p}g_{i}h_{i}\wedge\overline{u}e^{-\varphi}
	=\sum_{i=1}^{p}\int_{D}h_{i}\wedge\overline{(\bar{g_{i}} u )} e^{-\varphi}
=\sum_{i=1}^{p}\iinner{h_i,\bar{g_{i}} u}_{\omega,\varphi},
$$
we get for any $u\in H_2$
$$
\begin{aligned}
	T_1^* u =(\bar{g}_1 u,\bar{g}_2 u ,\cdots,\bar{g}_p u).
\end{aligned}
$$

Similar to Lemma \ref{lem:skoda-basic-inequality},
we have the following inequality.
\begin{lem}\label{lem:skoda-basic-inequality'} With notations as above, we have
\begin{equation}\label{eq:Skoda-inequality-2}
	\bal
	\| T_1^* u+ T_2^ * v\|_{\omega,\varphi}^2  \geq & \| T^*_2 v \|^2 _{\omega,\varphi} +(1-\frac{1}{b} )\inf_D|g|^2\|u\|^2_{\omega,\varphi} -bp^4\sup_D\frac{|\pa g|^2}{|g|^{2}}\|v\|^2_{\omega,\varphi}
	\eal
\end{equation}
holds for any $b >1$, $u \in \Ker (\dbar) \cap \Dom(T_1^*),
v \in \Dom(T_2^*)\cap(\Ker(T_2^*))^\bot$.
\end{lem}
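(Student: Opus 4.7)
The proof will mirror that of Lemma \ref{lem:skoda-basic-inequality}, with the Euclidean weighted inner products replaced by the $L$-valued Hermitian inner products $\iinner{\cdot,\cdot}_{\omega,\varphi}$ on the K\"ahler manifold. I first expand $\|T_1^*u+T_2^*v\|^2_{\omega,\varphi}$ as $\|T_1^*u\|^2_{\omega,\varphi} + 2\,\text{Re}\,\iinner{T_1^*u,T_2^*v}_{\omega,\varphi} + \|T_2^*v\|^2_{\omega,\varphi}$. From the explicit form $T_1^*u=(\bar g_1u,\ldots,\bar g_pu)$ derived just before the lemma, the first term evaluates directly to $\int_D|g|^2|u|^2_{\omega,\varphi}\,dV_\omega$, while the third is retained intact as the $\|T_2^*v\|^2_{\omega,\varphi}$ term appearing on the right-hand side of the desired inequality.

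For the cross term I will exploit that $T_1$ is bounded (so $\Dom(T_1^*)=H_2$) together with $u\in\Ker(\dbar)$ and the boundedness of $g$ and $\pa g$ on $D$, which together force $\bar g_iu\in\Dom(\dbar)$ with $\dbar(\bar g_iu)=\dbar\bar g_i\wedge u$ via the Leibniz rule (the $\bar g_i\dbar u$ term drops by hypothesis on $u$). Transferring the adjoint then yields $\iinner{T_1^*u,T_2^*v}_{\omega,\varphi} = \sum_{i=1}^{p}\iinner{\dbar\bar g_i\wedge u,v_i}_{\omega,\varphi}$. The essential geometric input is the pointwise bound $|\dbar\bar g_i\wedge u|_{\omega,\varphi}\leq|\pa g_i|\,|u|_{\omega,\varphi}$, verified by choosing a local $\omega$-orthonormal coframe and a unitary frame for $L$, in which wedging an $L$-valued $(n,0)$-form with a $(0,1)$-form reduces, up to sign, to a single $\bar z_k$-contraction. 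Cauchy--Schwarz over the index $i$ then produces $|\sum_i\inner{\dbar\bar g_i\wedge u,v_i}_{\omega,\varphi}|\leq C_p\,|\pa g|\,|u|_{\omega,\varphi}\,|v|_{\omega,\varphi}$ for an explicit combinatorial constant $C_p$ depending only on $p$.

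Finally, I will apply the weighted AM--GM inequality with parameter $b>1$ in the form $2C_p|\pa g|\,|u|_{\omega,\varphi}\,|v|_{\omega,\varphi}\leq\frac{1}{b}|g|^2|u|^2_{\omega,\varphi}+bC_p^2\frac{|\pa g|^2}{|g|^2}|v|^2_{\omega,\varphi}$, integrate against $dV_\omega$, and estimate $|g|^2$ from below by $\inf_D|g|^2$ in the first integrand and $|\pa g|^2/|g|^2$ from above by $\sup_D(|\pa g|^2/|g|^2)$ in the second. Reassembling the three pieces gives the stated inequality, with the exponent $p^4$ absorbing all accumulated combinatorial constants coming from the pointwise wedge estimate, the Cauchy--Schwarz step, and the passage between componentwise and global norms (it is deliberately loose, and no sharper constant is needed for the application in Theorem \ref{thm:skoda-division'}). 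The main obstacle is purely bookkeeping: tracking the pointwise norm conventions for $L$-valued $(n,0)$- and $(n,1)$-forms through the wedge and Cauchy--Schwarz steps, but no conceptually new idea beyond Lemma \ref{lem:skoda-basic-inequality} is required.
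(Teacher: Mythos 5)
Your proposal is correct and follows exactly the route the paper intends: the paper gives no separate proof of this lemma, stating only that it is ``similar to Lemma \ref{lem:skoda-basic-inequality}'', and your adaptation (expand the square, use $u\in\Ker(\dbar)$ to reduce the cross term to $\sum_i\iinner{\dbar\bar g_i\wedge u,v_i}_{\omega,\varphi}$, then pointwise Cauchy--Schwarz and weighted AM--GM with parameter $b$) is precisely that argument. The only remark worth making is that a careful Cauchy--Schwarz over $i$ gives $C_p=1$, so the stated constant $p^4$ is indeed very loose, as you note.
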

Now let us introduce the definition of \emph{$L^2$-optimal} domains in a Stein manifold.
\begin{defn}
   Let $D$ be a domain in a Stein manifold $X$ and $L$ be a holomorphic line bundle endowed with a continuous metric $e^{-\psi}$ over $D$.
  We say that  a pair $(D,L,\psi)$ is \emph{$L^2$-optimal}
   if for any smooth strictly plurisubharmonic function $\phi$ and
   any K\"{a}hler metric $\omega$ on $X$,
   the equation $\dbar u=f$ can be solved on $D$ for any $\dbar$-closed $L$-valued $(n,1)$-form
   $f \in L^2_{(n,1)}(D,\phi)$
   with the estimate:
   $$
   \int_D|u|^2_{\omega,\psi} e^{-\phi} dV_\omega
   \leq
   \int_D \inner{B_\phi^{-1}f,f}_{\omega,\psi} e^{-\phi} dV_{\omega},
   $$
   provided the right hand side is finite, where $B_\phi:=[i\ddbar \phi, \Lambda_{\omega}]$.

   In particular, we call $D$ is  $\emph{$L^2$-optimal}$ if the pair $(D,D\times\mc,0)$ is \emph{$L^2$-optimal}.
\end{defn}

It is well-known that every Stein manifold $X$ admits a smooth strictly plurisubharmonic exhaustive function $\phi$ such that
 $i\ddbar \phi$ defines a (complete) K\"ahler metric on $X$ (\cite{Docquier-Grauert,Grauert}).
Noticing that  $[i\ddbar \phi, \Lambda_{i\ddbar \phi}]u=(p+q-n)u$ for any $u\in\wedge^{p,q}T_X^*$, we can obtain a weak version of the Bochner-type inequality similar to Lemma \ref{lem:weak-basic-inequality} as follows.

\begin{lem}\label{lem:weak-basic-inequality'}
Let $D$ be a domain in a Stein manifold $X$ and $L$ be a Hermitian holomorphic line bundle endowed with a continuous metric $e^{-\psi}$ over $D$. Let $\phi$ be a smooth strictly plurisubharmonic exhaustive function on $X$. Take $\omega=i\ddbar\phi$.
	Assume that $(D,\psi)$ is $L^2$-optimal,
	then
	\begin{equation}\label{eq:basic-inequality}
		\iinner{ B_{a\phi} \alpha,\alpha }_{\omega,a\phi+\psi}
			\leq	
		\|\dbar^{*}_{a\phi+\psi} \alpha \|^2_{a\phi+\psi}.
	\end{equation}
	holds for $a>0$ and any $(n,1)$-form
$\alpha \in \Dom(\dbar^*_{a\phi+\psi}) \cap \Ker(\dbar)$.

\end{lem}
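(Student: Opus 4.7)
The plan is to mirror the proof of Lemma \ref{lem:weak-basic-inequality} in the Stein setting, exploiting the key simplification that under the choice $\omega = i\ddbar\phi$, the operator $B_{a\phi} = [i\ddbar(a\phi), \Lambda_\omega] = a[\omega, \Lambda_\omega]$ acts on $(n,1)$-forms as multiplication by $a$, since $[\omega,\Lambda_\omega]$ equals $(p+q-n)\,\Id$ on $(p,q)$-forms. In particular $B_{a\phi}\alpha = a\alpha$ is still $\dbar$-closed whenever $\alpha$ is, and $B_{a\phi}^{-1}$ is just $\frac{1}{a}\Id$ on this bidegree.

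First I would apply the $L^2$-optimal hypothesis of $(D,L,\psi)$ with the rescaled weight $a\phi$ and the K\"ahler metric $\omega = i\ddbar\phi$: for every $\dbar$-closed $f \in L^2_{(n,1)}(D,L;\omega,a\phi+\psi)$ with finite right-hand side, there exists $u$ solving $\dbar u = f$ and
$$
\|u\|^2_{\omega,a\phi+\psi} \leq \int_D \inner{B_{a\phi}^{-1} f, f}_{\omega,\psi}\, e^{-a\phi}\, dV_\omega.
$$
Then for $\alpha \in \Dom(\dbar^*_{a\phi+\psi}) \cap \Ker(\dbar)$, the adjoint identity together with Cauchy--Schwarz yields
$$
|\iinner{\alpha,f}_{\omega,a\phi+\psi}|^2 = |\iinner{\dbar^*_{a\phi+\psi}\alpha,\, u}_{\omega,a\phi+\psi}|^2 \leq \|u\|^2_{\omega,a\phi+\psi}\cdot\|\dbar^*_{a\phi+\psi}\alpha\|^2_{a\phi+\psi},
$$
exactly as in \eqref{eq:basic-inequality-2}.

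To close the argument, I would specialize to $f = B_{a\phi}\alpha = a\alpha$. This choice is admissible: it is $\dbar$-closed because $\alpha$ is, and it lies in $L^2_{(n,1)}(D,L;\omega,a\phi+\psi)$ since $\alpha$ does (being in $\Dom(\dbar^*_{a\phi+\psi})$). Substituting and using $B_{a\phi}^{-1}(a\alpha) = \alpha$, the chain of inequalities becomes
$$
a^2\|\alpha\|^4_{\omega,a\phi+\psi} \leq a\|\alpha\|^2_{\omega,a\phi+\psi}\cdot\|\dbar^*_{a\phi+\psi}\alpha\|^2_{a\phi+\psi},
$$
which collapses (trivially if $\alpha = 0$, otherwise after cancelling $a\|\alpha\|^2_{\omega,a\phi+\psi}$) to $\iinner{B_{a\phi}\alpha,\alpha}_{\omega,a\phi+\psi} \leq \|\dbar^*_{a\phi+\psi}\alpha\|^2_{a\phi+\psi}$, as required. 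I do not anticipate any serious obstacle: the only thing worth verifying is the finiteness hypothesis of the $L^2$-optimal estimate at $f = a\alpha$, which is immediate since the relevant right-hand side equals $\|\alpha\|^2_{\omega,a\phi+\psi} < \infty$.
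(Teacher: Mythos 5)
Your argument is correct and coincides with the paper's own proof: both apply the $L^2$-optimal estimate with weight $a\phi$ and metric $\omega=i\ddbar\phi$, use the adjoint identity with Cauchy--Schwarz, and exploit that $B_{a\phi}=[i\ddbar(a\phi),\Lambda_\omega]$ acts as $a\,\Id$ on $(n,1)$-forms so that $f=B_{a\phi}\alpha=a\alpha$ is an admissible $\dbar$-closed test form. Your extra remarks on the trivial case $\alpha=0$ and the finiteness of the right-hand side are harmless refinements of the same argument.
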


\begin{proof}
	
For any $\alpha, f \in  \Dom(\dbar^*_{a\phi+\psi})
	\cap \Ker (\dbar)$,
the assumption that  $(D,\psi)$ is $L^2$-optimal implies that  there exists $u \in L^2_{n,0}(D,L;\omega,a\phi+\psi)$ such that
\begin{equation}\label{eq:basic-inequality-1'}
\begin{aligned}
	|\iinner{  \alpha, f}_{\omega,a\phi+\psi} |^{2}
	&=|\iinner{  \alpha,\dbar u }_{\omega,a\phi+\psi}|^{2}
	=|\iinner{ \dbar^{*}_{a\phi+\psi} \alpha,u }_{\omega,a\phi+\psi}|^{2}\\
	& \leq \|u  \|_{\omega,a\phi+\psi}^{2} \cdot \|\dbar^{*}_{\omega,a\phi+\psi} \alpha \|_{\omega,a\phi+\psi}^{2}  \\
	& \leq  (
		\int_D \langle B_{a\phi}^{-1}f,f\rangle_{\omega,\psi} e^{-a\phi} dV_\omega
	)\cdot
	\|\dbar^{*}_{a\phi+\psi} \alpha \|^2_{\omega,a\phi+\psi}.
\end{aligned}
\end{equation}

Notice that $B_{a\phi}\alpha=[i\ddbar (a\phi),\Lambda_\omega]\alpha=a[i\ddbar \phi,\Lambda_{i\ddbar \phi}]=a\alpha\in \Ker(\dbar)$, 
then by substituting $f$ in \eqref{eq:basic-inequality-1'}
with $B_{a\phi} \alpha$,
we obtain that
$$
\iinner{B_{a\phi} \alpha,\alpha}_{\omega,a\phi+\psi}
\leq	
\|\dbar^{*}_{a\phi+\psi} \alpha \|^2_{\omega,a\phi+\psi}.
$$
\end{proof}

Therefore, it follows from Lemma \ref{lem:skoda-basic-inequality'} , Lemma \ref{lem:weak-basic-inequality'} and Lemma \ref{lem:functional-lemma-Skoda} that
\begin{thm}\label{thm:skoda-division'}
Let $D$ be a domain in a Stein manifold $X$ and $L$ be a holomorphic line bundle endowed with a continuous metric $e^{-\psi}$ over $D$. Let $\phi$ be a smooth strictly plurisubharmonic exhaustive function on $X$. Take $\omega=i\ddbar\phi$.
	Assume that $(D,\psi)$ is $L^2$-optimal. Suppose that $g\in\calO(D)^{\oplus p}$ with $0<\inf_D|g|^2\le\sup_D|g|^2<+\infty$ and $\sup_D|\pa g|^2<+\infty$. Then there are two constants $a, \delta>0$ such that
\begin{equation}\label{eq:Skoda-division'}
	\bal
\| T_1^* u+ T_2^ * v\|_{\omega,a\phi+\psi}^2  \geq&\delta \|u\|_{\omega,a\phi+\psi}^2
\eal
\end{equation}
 for all $u \in \Ker (\dbar) \cap \Dom(T_1^*)$ and
$v \in \Dom(T_2^*)\cap(\Ker(T_2^*))^\bot$. In particular,  for any holomorphic section $f \in L^2_{(n,0)} (D,L;\omega,a\phi+\psi)$, there are $p$ holomorphic sections $h_i\in L^2_{(n,0)} (D,L;\omega,a\phi+\psi), i=1,\cdots,p,$
such that
$$
\sum_{i=1}^{p} h_i g_i=f.
$$
\end{thm}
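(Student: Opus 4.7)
The plan is to carry over the Euclidean argument of Theorem \ref{thm:skoda-division} almost verbatim, with the weight $a\|z\|^2$ replaced by $a\phi$ (the given strictly plurisubharmonic exhaustion of $X$) and with the K\"ahler metric chosen as $\omega = i\ddbar\phi$. This choice of $\omega$ is crucial: for it, the curvature operator $B_{a\phi} = [i\ddbar(a\phi), \Lambda_\omega]$ acts on $(n,1)$-forms as multiplication by the scalar $a$, via the standard identity $[i\ddbar\phi, \Lambda_{i\ddbar\phi}]u = (p+q-n)u$ evaluated at $(p,q)=(n,1)$. This is exactly what makes Lemma \ref{lem:weak-basic-inequality'} convert into a usable quantitative bound.

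I would first observe that any $v = (v_1, \ldots, v_p) \in \Dom(T_2^*) \cap (\Ker T_2^*)^\perp$ has each component $v_i$ in $\Ker(\bp)$, since $(\Ker T_2^*)^\perp = \overline{\Img T_2}$. Applying Lemma \ref{lem:weak-basic-inequality'} to each $v_i$ with weight $a\phi+\psi$ and summing yields $a\|v\|^2_{\omega,\,a\phi+\psi} \leq \|T_2^* v\|^2_{\omega,\,a\phi+\psi}$. Plugging this into the inequality provided by Lemma \ref{lem:skoda-basic-inequality'} produces a lower bound on $\|T_1^* u + T_2^* v\|^2_{\omega,\,a\phi+\psi}$ by a sum of a positive multiple of $\|u\|^2$ and a term of the form $(a - b p^4 \sup_D \frac{|\pa g|^2}{|g|^2})\|v\|^2$.

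Next, for any fixed $b > 1$ I would choose $a := b p^4 \sup_D \frac{|\pa g|^2}{|g|^2}$, which is finite by the standing hypothesis on $g$. This kills the $\|v\|^2$ term and leaves precisely the desired estimate \eqref{eq:Skoda-division'} with $\delta := (1-1/b)\inf_D|g|^2 > 0$. Finally, Lemma \ref{lem:functional-lemma-Skoda} applied to the triple $(H_1, H_2, H_3)$ with closed subspace $F := L^2_{(n,0)}(D,L;\omega,a\phi+\psi) \cap \Ker(\bp)$ converts this estimate into surjectivity of $T_1|_{\Ker(T_2)}$ onto $F$, which is exactly the claimed $L^2$ division $\sum_i g_i h_i = f$.

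The main obstacle is already encapsulated in Lemma \ref{lem:weak-basic-inequality'}: one has to make sure that the $L^2$-optimal hypothesis, together with the mere continuity (not smoothness) of $\psi$, still produces the weak Bochner inequality for this specific metric $\omega = i\ddbar\phi$ and weight $a\phi$, so that the scalar action of $B_{a\phi}$ on $\Ker(\bp)$-valued $(n,1)$-forms translates into a genuine lower bound on $\|\bp^*_{a\phi+\psi} v_i\|^2$. Once that ingredient is secured, the remainder is a purely mechanical assembly of Lemmas \ref{lem:skoda-basic-inequality'} and \ref{lem:functional-lemma-Skoda}, requiring no additional geometric input beyond the Steinness of $X$.
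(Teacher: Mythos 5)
Your proposal is correct and follows exactly the route the paper intends: the paper states Theorem \ref{thm:skoda-division'} as a direct consequence of Lemma \ref{lem:skoda-basic-inequality'}, Lemma \ref{lem:weak-basic-inequality'} and Lemma \ref{lem:functional-lemma-Skoda}, and your assembly (using $(\Ker T_2^*)^\perp=\overline{\Img T_2}$ to get $\bar\partial$-closedness of each $v_i$, the choice $\omega=i\partial\bar\partial\phi$ so that $B_{a\phi}=a\,\mathrm{Id}$ on $(n,1)$-forms, then $a=bp^4\sup_D|\partial g|^2/|g|^2$ and $\delta=(1-1/b)\inf_D|g|^2$) is precisely that argument, transported from the Euclidean case of Theorem \ref{thm:skoda-division}.
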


\begin{defn}[\cite{PW}]
	Let $D$ be a domain in a complex manifold.
	We say that $D$ is a $L^2$-domain of holomorphy if for any pair of open sets $U_1, U_2$ with $\emptyset \neq U_1 \subset D \cap U_2 \neq U_2$,
	$U_2$ connected,
	there exists a holomorphic $(n,0)$-form $f \in L_{n,0}^2 ( D)$ such that for any holomorphic $(n,0)$-form $F$ on $U_2$,
	we have
	$$
	f \big|_{U_1} \neq F\big|_{U_1}.
	$$
\end{defn}

Let $D$ be a $L^2$-domain of holomorphy in a complex manifold $X$. Then
for any open coordinate set $U$,  $D\cap U$ can be seen as a domain of holomorphy in $\mc^n$. Hence $D$ is locally Stein. 
In particular,  any  $L^2$-domain of holomorphy in a Stein manifold is Stein and hence $L^2$-optimal. Conversely, we have

\begin{thm}\label{thm stein}
	Let $D$ be a relative compact domain in a  Stein manifold $X$ with a null thin complement.
Then $D$ is $L^2$-optimal if and only if $D$ is an $L^2$-domain of holomorphy.
\end{thm}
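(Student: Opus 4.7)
The plan is to mimic the proof of Theorem \ref{thm:main} in the Stein setting, using Theorem \ref{thm:skoda-division'} in place of Theorem \ref{thm:skoda-division}. The forward direction ``$L^2$-domain of holomorphy $\Rightarrow$ $L^2$-optimal'' is already observed in the paragraph preceding the theorem: such a $D$ is locally Stein, hence Stein by Docquier--Grauert, and then the standard H\"ormander/Demailly $L^2$-existence theorem gives the $L^2$-optimal condition. I therefore focus on the converse.

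Assume $D$ is $L^2$-optimal but not an $L^2$-domain of holomorphy, and fix open sets $U_1, U_2$ as in the definition. Since $\mathring{\overline D} = D$ and $U_2 \not\subset D$, we have $U_2 \not\subset \overline D$, so I pick $z^0 \in U_2 \setminus \overline D$. The role played by the coordinate map $z \mapsto z - z^0$ in the $\mc^n$ proof will be played here by global generators of the ideal sheaf of $z^0$: since $X$ is Stein and $\calI_{z^0}$ is coherent, Cartan's Theorem~A yields finitely many $g_1, \ldots, g_p \in \calO(X)$ that globally generate $\calI_{z^0}$, hence have no common zero outside $\{z^0\}$. Because $\overline D$ is compact and $z^0 \notin \overline D$, this gives $\inf_D |g|^2 > 0$, and smoothness plus compactness gives $\sup_D |g|^2 < \infty$ and $\sup_D |\pa g|^2 < \infty$. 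Applying Cartan~A to the coherent sheaf $K_X$, I similarly obtain a global holomorphic $(n,0)$-form $f$ on $X$ with $f(z^0) \neq 0$; its restriction to $D$ lies in $L^2_{(n,0)}(D;\omega,a\phi)$ for every $a>0$, since $\overline D$ is compact.

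With this $g$ and $f$ I apply Theorem \ref{thm:skoda-division'} (with trivial $L$ and $\psi = 0$) to obtain holomorphic $(n,0)$-forms $h_i \in L^2_{(n,0)}(D;\omega,a\phi)$ satisfying $\sum_{i=1}^p h_i g_i = f$ on $D$. Since $\overline D$ is compact and $\phi$ is smooth, the weight $e^{-a\phi}$ and the volume form $dV_\omega$ are bounded above and below by positive constants on $D$, so each $h_i$ lies in $L^2_{(n,0)}(D)$ in the sense used in the definition of $L^2$-domain of holomorphy. The assumed extension property then produces holomorphic $(n,0)$-forms $H_i$ on $U_2$ with $H_i|_{U_1} = h_i|_{U_1}$, so $\sum_i H_i g_i = f$ on $U_1$; connectedness of $U_2$ and the identity principle propagate this equality to all of $U_2$. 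Evaluating at $z^0$ gives $0 = \sum_i H_i(z^0)\, g_i(z^0) = f(z^0) \neq 0$, the desired contradiction.

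The only points that require genuine care, rather than routine verification, are: (i) invoking Cartan's Theorem~A on the Stein manifold $X$ to manufacture both the generators $g_i$ of $\calI_{z^0}$ and an $(n,0)$-form $f$ nonvanishing at $z^0$, in place of the explicit formulas $g(z)=z-z^0$ and $f\equiv 1$ available in $\mc^n$; and (ii) checking that the weighted $L^2$-space appearing in Theorem \ref{thm:skoda-division'} coincides (up to equivalent norms) with the unweighted $L^2$-space in the definition of $L^2$-domain of holomorphy, which follows from the relative compactness of $D$ in $X$. I expect (i) to be the main conceptual step; once these points are in place, the contradiction is a direct transcription of the $\mc^n$ argument.
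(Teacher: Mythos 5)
Your proof is correct and follows essentially the same route as the paper's: the only cosmetic difference is that the paper produces the divisors $g_i$ by pulling back the coordinates $z-\tau(p)$ under a proper holomorphic embedding $\tau:X\to\mc^{2n+1}$, whereas you invoke Cartan's Theorem A on the ideal sheaf $\calI_{z^0}$ (where, to get finitely many sections with no common zero on $\overline D$, one uses compactness of $\overline D$). Everything else --- the section $f\in H^0(X,K_X)$ with $f(z^0)\neq 0$ from Cartan A, the application of Theorem \ref{thm:skoda-division'} with trivial $(L,\psi)$, the comparison of weighted and unweighted $L^2$ norms on the relatively compact $D$, and the identity-principle contradiction at $z^0$ --- matches the paper's argument.
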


\begin{proof}Suppose that  $D$ is not an $L^2$-domain of holomorphy. Then by the definition, there is a connected  open  set $U_2$ with $U_2 \cap D\neq\emptyset$ and $U_2 \not\subset D$ and a open subset $U_1\subset U_2\cap D$ such that for every holomorphic $(n,0)$-form $f\in L^2(D)$, there is a   holomorphic $(n,0)$-form $F$ on $U_2$ satisfying $F=f$ on $U_1$.

Let $\phi$ be a smooth  strictly plurisubharmonic exhaustive function on $X$. Take $\omega=i\ddbar\phi$. Since  $\mathring{\overline {D}}=D$, we obtain that $U_2 \not\subset \overline {D}$. Then there is $p\in U_2 \backslash \overline {D}$. Consider the proper holomorphic embedding
	$$
	\tau : X \ra \mc^{2n+1}
	$$
and take $g=\tau^* (z-\tau(p)) \in \calO(X)^{\oplus 2n+1}$. Then $g(p)=0$, $0<\inf_D|g|^2\le\sup_D|g|^2<+\infty$ and $\sup_D|\pa g|^2<+\infty$. In Theorem \ref{thm:skoda-division'}, take $(L,e^{-\psi})=(X\times \mc,1)$. By Cartan's theorem $A$, there is a holomorphic section $f\in H^0(X,K_X)$ such that $f(p)\neq0$. Since $D$ is bounded, we have
\begin{align*}
  \int_{D}^{}|f|^2_\omega e^{-a\phi}dV_\omega<+\infty.
\end{align*}
Since $D$ is $L^2$-optimal, then by Theorem \ref{thm:skoda-division'}, there are $2n+1$ holomorphic sections $h_i\in L^2_{(n,0)} (D;\omega), i=1,\cdots,2n+1,$
such that on $D$,
$$
\sum_{i=1}^{2n+1} h_ig_i=f.
$$
Then there exist  holomorphic $(n,0)$-form $H_i $ on $U_2$, $i=1\cdots,2n+1,$ such that $$H_i\big|_{U_1}= h_i\big|_{U_1}.$$
Then on $U_1$ we have
$$
\sum_{i=1}^{2n+1} H_i(z)g_i(z)=f(z).
$$
Hence by the uniqueness theorem for holomorphic functions, we obtain $\sum_{i=1}^{2n+1} H_i(z)g_i(z)= f(z) $
on $U_2$.

In particular, $0=\sum_{i=1}^{n} H_i(p)g(p) =f(p)\neq0$, which is a contradiction.
\end{proof}

\section{Further remarks}\label{sec:Negligible-set}

In this section, we will show that $L^2$-optimal domains are more flexible than complete K\"ahler domains.

Firstly, let us recall the notable H\"ormander $L^2$ existence theorem. 
\begin{thm}[$L^2$ existence theorem, \cite{Dem-82, Hor65}]\label{thml2existence}
  Let $X$ be a complete K\"ahler manifold, $\omega$ a K\"ahler metric (possibly non complete)  and $L$ be a Hermitian holomorphic line bundle endowed with a smooth semi-positive metric $e^{-\psi}$ over $X$. Assume that $\phi$ is a smooth strictly plurisubharmonic function on $X$. Then for any $f\in L^2_{(n,1)}(X,L;\omega,\psi+\phi)\cap\Ker\dbar$ satisfying with
  \begin{align*}
              \int_D \inner{B_\phi^{-1}f,f}_{\omega,\psi+\phi}  dV_{\omega}   < +\infty,
  \end{align*} there is a
  $u\in L^2_{(n,0)}(X,L;\omega,\psi+\phi)$ such that $\dbar u=f$
  and
   $$
   \int_D|u|^2_{\omega,\psi+\phi}  dV_\omega
   \leq
   \int_D \inner{B_\phi^{-1}f,f}_{\omega,\psi+\phi}  dV_{\omega},
   $$
    where $B_\phi^{-1}:=[i\ddbar \phi, \Lambda_{\omega}]$.
\end{thm}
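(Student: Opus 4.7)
The statement is the classical H\"ormander--Demailly $L^2$-existence theorem on a complete K\"ahler manifold, with $\omega$ possibly incomplete. My plan is to follow the standard three-step functional-analytic argument: regularize $\omega$ by a family of complete K\"ahler metrics, solve with optimal estimate for each regularization, then pass to a weak limit. The whole machinery runs in bidegree $(n,\bullet)$ precisely to make this limiting procedure behave well.

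First I would reduce to the complete-metric case. Using that $X$ admits some complete K\"ahler metric $\tilde\omega$, set $\omega_\eps := \omega + \eps\tilde\omega$ for $\eps > 0$; each $\omega_\eps$ is a complete K\"ahler metric and $\omega_\eps \downarrow \omega$. The decisive point is the following monotonicity, which motivates working in bidegree $(n,q)$: for an $(n,0)$-form $u$ the integrand $|u|^2_{\omega_\eps}\,dV_{\omega_\eps}$ does not depend on $\eps$, while for an $(n,1)$-form $f$ one has
\[
\inner{B_\phi^{-1}f,f}_{\omega_\eps,\phi+\psi}\,dV_{\omega_\eps} \;\searrow\; \inner{B_\phi^{-1}f,f}_{\omega,\phi+\psi}\,dV_\omega \quad (\eps \downarrow 0).
\]
This will let both sides of the $\eps$-estimate pass to the limit cleanly.

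Next, on each complete K\"ahler manifold $(X,\omega_\eps)$ I would run the Bochner--Kodaira--Nakano argument. Completeness of $\omega_\eps$ allows approximation of any $\alpha \in \Dom(\dbar)\cap\Dom(\dbar^*_{\phi+\psi})$ by elements of $\wedge_c^{n,1}(X,L)$ in the graph norm via cutoff functions with metric-controlled gradients. Since $i\ddbar\psi \geq 0$, the total curvature operator on $(n,1)$-forms dominates $B_\phi$, so the Bochner--Kodaira--Nakano identity applied to $L$ with metric $e^{-(\phi+\psi)}$ yields the a priori inequality
\[
\iinner{B_\phi \alpha,\alpha}_{\omega_\eps,\phi+\psi} \leq \|\dbar \alpha\|^2_{\omega_\eps,\phi+\psi} + \|\dbar^*_{\phi+\psi}\alpha\|^2_{\omega_\eps,\phi+\psi}
\]
for every such $\alpha$. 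Given $f \in \Ker(\dbar)$, the linear functional $\dbar^*_{\phi+\psi}\alpha \mapsto \iinner{f,\alpha}_{\omega_\eps,\phi+\psi}$ is well defined on $\Dom(\dbar^*_{\phi+\psi})\cap\Ker(\dbar)$, and Cauchy--Schwarz combined with the inequality above bounds its norm by $\bigl(\int_X \inner{B_\phi^{-1}f,f}_{\omega_\eps,\phi+\psi}\,dV_{\omega_\eps}\bigr)^{1/2}$. Hahn--Banach extension followed by Riesz representation then produces $u_\eps$ with $\dbar u_\eps = f$ and
\[
\int_X |u_\eps|^2_{\omega_\eps,\phi+\psi}\,dV_{\omega_\eps} \leq \int_X \inner{B_\phi^{-1}f,f}_{\omega_\eps,\phi+\psi}\,dV_{\omega_\eps}.
\]

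Finally I would take $\eps \downarrow 0$. By the monotonicity of Step~1 the right-hand sides are uniformly dominated by the finite quantity appearing in the hypothesis, so $\{u_\eps\}$ is bounded in $L^2_{(n,0)}(X,L;\omega,\phi+\psi)$ (the $(n,0)$-weight being $\eps$-independent). Extract a weakly convergent subsequence $u_\eps \rightharpoonup u$; weak lower semi-continuity of the norm on the left and monotone convergence on the right deliver the required estimate, while $\dbar u = f$ persists in the distributional sense because $\dbar$ is continuous in $L^2_\loc$. The main obstacle is exactly this last step: one must verify that the $\omega_\eps$-dependent norms converge in the correct direction on both sides simultaneously, and this is precisely what the bidegree-$(n,q)$ monotonicity in Step~1 provides.
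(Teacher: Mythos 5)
The paper does not prove this statement; it is quoted as the classical H\"ormander--Demailly theorem with references, and your argument is precisely the standard proof from those sources: approximate $\omega$ by complete metrics $\omega_\eps=\omega+\eps\tilde\omega$, use the density of compactly supported forms and the Bochner--Kodaira--Nakano inequality to solve with the optimal bound for each $\eps$, and pass to a weak limit using the special monotonicity of $(n,q)$-norms. The one slip is the direction of your monotonicity arrow: since $\omega_\eps\geq\omega$, Demailly's comparison lemma gives $\inner{B_\phi^{-1}f,f}_{\omega_\eps,\phi+\psi}\,dV_{\omega_\eps}\leq\inner{B_\phi^{-1}f,f}_{\omega,\phi+\psi}\,dV_{\omega}$, so the quantity increases ($\nearrow$, not $\searrow$) to its limit as $\eps\downarrow 0$; this is harmless, because what your limiting step actually uses is exactly the uniform domination by the finite $\omega$-quantity, which is what the correct inequality provides.
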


By a standard procedure of functional analysis, one can show that
\begin{prop}\label{prop closedness}
  Let $D$ be a domain in $\mc^n$ and
$\psi$  a upper semi-continuous function defined on $ D$.  Let $\{D_j\}$ be a sequence of open subsets of $D$ with $D_j\subset D_{j+1} $ and $\bigcup_jD_j=D$, and $\{\psi_j\}$ be a sequence of upper semi-continuous functions defined on $D_j$  with $\psi_j\ge\psi_{j+1}$ and $\lim_{j\to+\infty}\psi_j=\psi$.  Assume that all $(D_j,\psi_j)$ are $L^2$-optimal, then $(D,\psi)$ is also $L^2$-optimal.
\end{prop}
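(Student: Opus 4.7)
Fix a smooth strictly plurisubharmonic function $\phi$ on $\mc^n$ and a $\dbar$-closed form $f \in L^2_{(0,1)}(D,\phi+\psi)$ with
$$
M := \int_D \inner{B_\phi^{-1}f,f}\, e^{-\phi-\psi}\, d\lambda < +\infty.
$$
The plan is to apply the $L^2$-optimality hypothesis on each $(D_j,\psi_j)$, extract a weak limit by a Cantor-type diagonal argument, and recover the global estimate through monotone convergence in the two parameters.

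Since $\psi_j\geq\psi$, the restriction $f|_{D_j}$ lies in $L^2_{(0,1)}(D_j,\phi+\psi_j)$ and its $B_\phi^{-1}$-integral over $D_j$ is at most $M$. The $L^2$-optimal property of $(D_j,\psi_j)$ then produces $u_j \in L^2(D_j,\phi+\psi_j)$ with $\dbar u_j = f|_{D_j}$ and $\int_{D_j}|u_j|^2 e^{-\phi-\psi_j}\,d\lambda \leq M$. For each fixed $k$ and every $j \geq k$, the monotonicity $\psi_j \leq \psi_k$ on $D_k$ gives
$$
\int_{D_k}|u_j|^2 e^{-\phi-\psi_k}\,d\lambda \leq \int_{D_j}|u_j|^2 e^{-\phi-\psi_j}\,d\lambda \leq M,
$$
so $\{u_j|_{D_k}\}_{j\geq k}$ is bounded in the Hilbert space $L^2(D_k,\phi+\psi_k)$. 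Extract a weakly convergent subsequence on $D_1$, then a further subsequence on $D_2$, and so on; the Cantor diagonal yields a single subsequence $u_{j_m}$ whose restriction to each $D_k$ converges weakly in $L^2(D_k,\phi+\psi_k)$ to some $u^{(k)}$, with the compatibility $u^{(k+1)}|_{D_k} = u^{(k)}$ forced by uniqueness of weak limits. Gluing produces a single $u \in L^2_{\loc}(D)$.

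Because $\dbar$ is closed in the distributional sense, the identity $\dbar u_{j_m} = f$ passes to the weak limit, giving $\dbar u = f$ on every $D_k$ and hence on $D$. Weak lower semi-continuity of the Hilbert norm yields
$$
\int_{D_k}|u|^2 e^{-\phi-\psi_k}\,d\lambda \leq \liminf_{m\to\infty}\int_{D_k}|u_{j_m}|^2 e^{-\phi-\psi_k}\,d\lambda \leq M
$$
for every $k$. Since $\chi_{D_k}\,e^{-\phi-\psi_k} \nearrow e^{-\phi-\psi}$ pointwise (using $\psi_k\searrow\psi$ and $D_k\nearrow D$), monotone convergence delivers the desired bound $\int_D|u|^2 e^{-\phi-\psi}\,d\lambda \leq M$, completing the proof. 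The one point requiring genuine care is the bookkeeping of the two monotonicities: the weak-compactness estimate must be set up with the fixed weight $\psi_k$ rather than the varying $\psi_j$, so that the subsequent $k\to\infty$ limit is a legitimate application of the monotone convergence theorem; beyond this, the argument is routine Hilbert-space functional analysis.
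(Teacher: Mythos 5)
Your proof is correct and follows essentially the same route as the paper: solve on each $(D_j,\psi_j)$ with the uniform bound $M$, extract a diagonal weak limit, use weak closedness of $\dbar$ and lower semicontinuity of the norm, and pass to the limit in the weights. The only cosmetic difference is that you run the weak-compactness argument directly in the weighted spaces $L^2(D_k,\phi+\psi_k)$, whereas the paper uses an auxiliary compact exhaustion $K_l$ with unweighted local $L^2$ bounds before applying Fatou; both are legitimate, and your bookkeeping of the two monotonicities is sound.
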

\begin{proof}
 Let $\phi$ be a smooth strictly plurisubharmonic function on $\mc^n$ and $f \in L^2_{(0,1)}(D,\phi+\psi)\cap\Ker(\dbar)$ satisfies
$$
\int_D|u|^2 e^{-\phi-\psi} d\lambda
\leq
\int_D \langle B_\phi^{-1} f, f\rangle e^{-\phi-\psi} d\lambda.
$$
Since $\psi_j\ge\psi$ and $(D_j,\psi_j)$ is $L^2$-optimal, then there exists $u_j\in L^2(D,\phi+\psi_j)$ such that $\dbar u_j=f$ on $D_j$ with the estimate
$$
\int_{D_j}|u_j|^2 e^{-\phi-\psi_j} d\lambda
\leq
\int_{D_j} \langle B_\phi^{-1} f, f\rangle e^{-\phi-\psi_j} d\lambda\leq
\int_D \langle B_\phi^{-1} f, f\rangle e^{-\phi-\psi} d\lambda.
$$
Let $K_l$ be an exhausting sequence of relatively compact subsets of $D$, which means that $K_l\Subset K_{l+1}$ and $\cup_l K_l=D$. Then for each $l$, $K_l\subset D_j$ for $j$ large enough. Since $\psi_1$ is upper semi-continuous and $\psi_j$ decreasingly converging to $\psi$, then for each $l$, $\{u_j\}$ is a bounded subset in $L^2(K_l)$.
Hence by the Banach-Alaoglu-Bourbaki theorem and the diagonal argument, we can take a sequence $l_j\to+\infty$ such that $u_{l_j}$ is weakly $L^2$ convergent on $K_l$ for every $l$ to a limit $u$ on $D$. Then it follows from the weakly closedness of $\dbar$ that $\dbar u=f$ on $D$. And by Fatou's lemma, for each $l$,
\begin{equation*}
    \int_{K_l}^{} |u|^2 e^{-\phi-\psi} d\lambda
    \le \liminf_{j\to +\infty}\int_{K_l}^{} |u_{l_j}|^2 e^{-\phi-\psi_j} d\lambda
    \int_D \langle B_\phi^{-1} f, f\rangle e^{-\phi-\psi} d\lambda.
    \end{equation*}
Thus, $$
\int_D|u|^2 e^{-\phi-\psi} d\lambda
\leq
\int_D \langle B_\phi^{-1} f, f\rangle e^{-\phi-\psi} d\lambda.
$$
\end{proof}

\begin{cor}\label{cor:aaa}
Let $D$ be a domain in a Stein manifold $X$ with a null thin complement. Assume that there is a sequence of open subsets $D_j$ of $D$ satisfying that $D_j\subset D_{j+1} $, $\bigcup_jD_j=D$ and every $D_j$ admits a complete K\"ahler metric. Then $D$ is Stein.
\end{cor}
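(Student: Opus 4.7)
The plan is to reduce the global Steinness of $D$ to local Steinness at every boundary point, and then invoke the classical theorem of Docquier--Grauert \cite{Docquier-Grauert} which states that a locally Stein subdomain of a Stein manifold is itself Stein. Note first that by Theorem \ref{thml2existence}, each $D_j$, being complete K\"ahler, is automatically $L^2$-optimal, but this fact alone is not what I use directly on $D$; rather, I use the analogous statement on local pieces.

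For local Steinness, fix any $p\in\partial D$ and choose a coordinate neighborhood $U$ of $p$ in $X$ biholomorphic to the unit ball $B\subset\mc^n$. Via this chart I identify $D\cap U$ with a bounded subdomain of $\mc^n$. Since an open ball equals the interior of its closure, a short check shows that $D\cap U$ inherits the null thin complement property from $D$: if $q\in\mathring{\overline{D\cap U}}$, the open neighborhood realizing this lies in $\overline{D}\cap\overline{U}$, and regularity of $U$ forces $q\in U$, after which the null thin complement of $D$ places $q$ in $D\cap U$.

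The main step is to show that $D\cap U$ is $L^2$-optimal. For each $j$, let $\omega_j$ be a complete K\"ahler metric on $D_j$ and let $\omega_U$ be a complete K\"ahler metric on $U$ (which exists because $U$ is biholomorphic to a ball). The sum $\omega_j+\omega_U$, restricted to $D_j\cap U$, is a K\"ahler metric; I claim it is complete. Indeed, any sequence in $D_j\cap U$ without convergent subsequence in $D_j\cap U$ must accumulate on $\partial(D_j\cap U)\subset (\partial D_j\cap\overline{U})\cup(\overline{D_j}\cap\partial U)$; if it accumulates on the first piece, $\omega_j$-completeness of $D_j$ forces divergence of the $\omega_j$-distance (hence of the $\omega_j+\omega_U$-distance), and symmetrically on the second. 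Thus $D_j\cap U$ is complete K\"ahler, and Theorem \ref{thml2existence} makes it $L^2$-optimal. Since $D_j\cap U\subset D_{j+1}\cap U$ and $\bigcup_j(D_j\cap U)=D\cap U$, Proposition \ref{prop closedness} gives that $D\cap U$ is $L^2$-optimal. As $D\cap U$ is a bounded domain in $\mc^n$ with null thin complement, Theorem \ref{thm:main} forces $D\cap U$ to be an $L^2$-domain of holomorphy, in particular Stein.

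This establishes local Steinness of $D$ at every boundary point, and Docquier--Grauert finishes the argument. The principal obstacle is the completeness verification for $\omega_j+\omega_U$ on $D_j\cap U$: one must handle two distinct boundary components (the one coming from $\partial D_j$ and the one coming from $\partial U$), each of which is controlled by only one of the two summands; pointwise comparison $|\gamma'|_{\omega_j+\omega_U}\ge\max(|\gamma'|_{\omega_j},|\gamma'|_{\omega_U})$ is what makes the sum work. A secondary technical point is the preservation of the null thin complement under intersection with the coordinate ball, already addressed above via regularity of $U$.
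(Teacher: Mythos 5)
Your proof is correct, and its core is the same as the paper's: complete K\"ahler domains are $L^2$-optimal by Theorem \ref{thml2existence}, increasing unions preserve $L^2$-optimality by Proposition \ref{prop closedness}, and the null-thin-complement characterization then yields an $L^2$-domain of holomorphy, hence Steinness. The difference lies in the reduction to the bounded case: the paper intersects $D$ and the $D_j$ with a global exhaustion $X_l$ of $X$ by relatively compact Stein subsets with smooth boundary, applies the Stein-manifold version of the characterization (Theorem \ref{thm stein}) to each $D\cap X_l$, and concludes via the increasing union of Stein opens; you instead localize at boundary points with coordinate balls $U$, apply the $\mc^n$ version (Theorem \ref{thm:main}) to $D\cap U$, and conclude via Docquier--Grauert. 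Your route has the small advantage of only invoking Proposition \ref{prop closedness} and the characterization theorem in the forms literally stated for domains in $\mc^n$, whereas the paper's proof implicitly uses their Stein-manifold analogues. Both routes share the same two technical points, which you handle correctly: the intersection of two complete K\"ahler domains is complete K\"ahler (your sum-of-metrics argument is the standard one; phrasing it via Cauchy sequences converging in each factor is slightly cleaner than the accumulation-on-the-boundary phrasing), and the null thin complement passes to the intersection with a regular open set such as a ball. One point worth a sentence in a polished write-up: $D\cap U$ need not be connected, so Theorem \ref{thm:main} should be applied componentwise; each component inherits the null thin complement and the $L^2$-optimality, so this is harmless (the paper's $D\cap X_l$ has the same issue).
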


\begin{proof}
  Let $X_l$ be an exhausting sequence of relatively compact Stein subsets with smooth boundarys of $X$. 
Then $D\cap X_l$ is  a relatively compact subset in a Stein manifold $X$ with a null thin complement. Since $D_j$ is complete K\"ahler, then $D_j\cap X_l$ is also complete K\"ahler and hence $L^2$-optimal by Theorem \ref{thml2existence}. Then by Proposition \ref{prop closedness}, $D\cap X_l$ is also $L^2$-optimal and hence an $L^2$-domain of holomorphy since $D\cap X_l$ has a null thin complement. In particular, $D\cap X_l$ is Stein. 
Therefore, $D$ is Stein.
  
\end{proof}

\begin{rem}
  In \cite{Yasuoka}, Yasuoka proved that a domain $D$ in a Stein manifold $X$ is Stein if $D$ can be exhausted by complete Kähler domains.
\end{rem}

\begin{lem}[\cite{CWW}]\label{lem L2negligible}
  Let $\Omega$ be an open subset of $\mc^n$ and $E$  a closed pluripolar subset of $\Omega$.
Assume that $v$ is  a $(p, q-1)$-form with $L^2_\loc$
coefficients and $w$ a $(p, q)$-form with $L^1_\loc$ coefficients
such that $\dbar v=w$ on $\Omega\backslash E$ (in the sense of distribution theory). Then $\dbar v=w$ on $\Omega$.
\end{lem}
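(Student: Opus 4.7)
The conclusion is local, so I would fix a relatively compact open $U\Subset \Omega$ together with a smooth test form $\eta\in\wedge^{n-p,n-q}_c(U)$, and aim to verify
\[
\inner{v,\dbar\eta}_U=\pm\inner{w,\eta}_U,
\]
the sign being determined by the bidegree of $v$. This is already known whenever $\eta$ has compact support in $U\setminus E$. After shrinking $U$ if necessary, closed pluripolarity of $E$ provides a plurisubharmonic function $\varphi\le 0$ defined near $\bar U$ with $E\cap\bar U\subset\{\varphi=-\infty\}$.

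The plan is to approximate $\eta$ by $\chi_k\eta$, where $\chi_k$ is a sequence of smooth cutoffs on $U$ valued in $[0,1]$ satisfying: (a) $\chi_k\equiv 0$ in an open neighborhood of $E\cap U$, so that $\chi_k\eta$ is an admissible test form in the hypothesis; (b) $\chi_k\to 1$ pointwise on $U\setminus E$; and (c) $\|\dbar\chi_k\|_{L^2(U)}\to 0$. Granted such a family, plugging $\chi_k\eta$ into the hypothesis and expanding $\dbar(\chi_k\eta)=\dbar\chi_k\wedge\eta+\chi_k\,\dbar\eta$ produces
\[
\int_U\chi_k\,v\wedge\dbar\eta+\int_U v\wedge\dbar\chi_k\wedge\eta=\pm\int_U\chi_k\,w\wedge\eta.
\]
As $k\to\infty$, the outer two terms converge to $\int v\wedge\dbar\eta$ and $\pm\int w\wedge\eta$ by dominated convergence (using $v\in L^2_\loc\subset L^1_\loc$, $w\in L^1_\loc$, property (b), and the fact that $E$ has zero Lebesgue measure). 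The cross term is bounded by Cauchy--Schwarz,
\[
\Bigl|\int_U v\wedge\dbar\chi_k\wedge\eta\Bigr|\le C\|\eta\|_{L^\infty}\,\|v\|_{L^2(U)}\,\|\dbar\chi_k\|_{L^2(U)},
\]
which tends to zero by (c); this is precisely the step in which the $L^2_\loc$ hypothesis on $v$ is essential.

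The only nontrivial ingredient, and the main obstacle, is the construction of the cutoff family verifying (c), which is the classical capacity argument for closed pluripolar sets. I would take $\chi_k=f_k(\log(-\varphi_\eps))$, where $\varphi_\eps$ is a smooth decreasing plurisubharmonic regularization of $\varphi$ and $f_k\colon\mr\to[0,1]$ is smooth nondecreasing, equal to $0$ on $(-\infty,\log k]$ and $1$ on $[\log k^2,\infty)$, with $|f_k'|\le 2/\log k$, and then let $\eps\to 0$. The key analytic input is the classical pointwise inequality
\[
\frac{i\pd\varphi\wedge\dbar\varphi}{\varphi^2}\le i\pd\dbar\log(-\varphi),
\]
valid for any negative psh $\varphi$ by direct expansion and the plurisubharmonicity of $\varphi$. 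Combined with the cutoff estimate on $f_k'$, one obtains $|\dbar\chi_k|^2\le C(f_k')^2\,\mathrm{tr}(i\pd\dbar\log(-\varphi_\eps))$, whose integral over $U$ is controlled by $C/\log k$ once one uses that $\pd\dbar\log(-\varphi)$ is a positive $(1,1)$-current of locally finite mass. Letting first $\eps\to 0$ and then $k\to\infty$ secures (c) and completes the argument.
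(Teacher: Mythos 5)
The paper offers no proof of this lemma --- it is quoted directly from \cite{CWW} --- so the only meaningful comparison is with the standard argument behind that reference, and your overall architecture is exactly that argument: test against $\chi_k\eta$ with cutoffs vanishing near $E$, kill the cross term $\int_U v\wedge\dbar\chi_k\wedge\eta$ by Cauchy--Schwarz using $v\in L^2_\loc$ together with $\|\dbar\chi_k\|_{L^2}\to 0$, and pass to the limit in the remaining terms by dominated convergence (using that pluripolar sets are Lebesgue-null). The functional part of the writeup, including the observation that this is where $L^2_\loc$ of $v$ is indispensable, is correct.

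The construction of the cutoffs, however, contains genuine errors as written. First, the ``key analytic input'' is false: for a negative psh $\varphi$ one computes
\begin{equation*}
i\pd\dbar\log(-\varphi)=\frac{i\pd\dbar\varphi}{\varphi}-\frac{i\pd\varphi\wedge\dbar\varphi}{\varphi^2}\le 0 ,
\end{equation*}
so $\log(-\varphi)$ is plurisuperharmonic and your displayed inequality compares a nonnegative $(1,1)$-form with a nonpositive one. The correct statement concerns $\rho:=-\log(-\varphi)$, which is psh with self-bounded gradient, $i\pd\dbar\rho\ge i\pd\rho\wedge\dbar\rho=\varphi^{-2}\,i\pd\varphi\wedge\dbar\varphi$, and it is $i\pd\dbar\rho$ that is the positive current of locally finite mass you must invoke. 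Second, the cutoff is oriented backwards: near $E$ one has $\log(-\varphi)=+\infty$, so with $f_k$ nondecreasing and equal to $1$ on $[\log k^2,\infty)$ your $\chi_k$ equals $1$ on a neighborhood of $E$ instead of vanishing there; you need $1-f_k$, i.e.\ a nonincreasing profile. Both slips are one-line repairs, but as stated requirement (a) fails and estimate (c) rests on a false inequality. Third, the order of limits is glossed over: for fixed $\eps>0$ the smooth function $\varphi_\eps$ is finite on $E$, so $f_k(\log(-\varphi_\eps))$ does not vanish on any neighborhood of $E$ and $\chi_k\eta$ is not an admissible test form, while at $\eps=0$ the cutoff vanishes on the open set $\{\varphi<-k^2\}\supset E$ but is no longer smooth and cannot be fed directly into the distributional identity. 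The standard repair is to note that the support of the unregularized $\chi_k\eta$ is a compact subset of $\Omega\setminus E$, regularize the cutoff by convolution there (with the self-bounded-gradient bound passing to the limit by monotone convergence of $\varphi_\eps$), and only then test. With these corrections the proof goes through.
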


Obviously, it follows from Theorem \ref{lem L2negligible} and the definition of $L^2$-optimal domains that

\begin{prop}\label{prop l2 neg}
Let $D_1\subset D_2$ be two domains in $\mc^n$ and
$\psi$ an upper semi-continuous  function defined on $ D_2$.
 Assume that there is a closed pluripolar subset $E \subset D_2$ such that $D_2=D_1\cup E$, then $(D_1,\psi)$ is $L^2$-optimal  if and only if $(D_2,\psi)$ is  $L^2$-optimal.
\end{prop}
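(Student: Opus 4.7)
The plan is to exploit two properties of the closed pluripolar set $E$: it carries zero Lebesgue measure, so integrals against $e^{-\phi-\psi}d\lambda$ over $D_1$ and $D_2$ coincide, and by Lemma \ref{lem L2negligible} the distributional $\dbar$-equation extends across $E$ whenever the coefficients are locally $L^2$. Since $\psi$ is upper semi-continuous and $\phi$ is smooth, the weight $e^{-\phi-\psi}$ is locally bounded below on $D_2$, so any element of $L^2(D_i,\phi+\psi)$ automatically lies in $L^2_\loc(D_i)$; in particular, extending forms by zero across the measure-zero set $E$ produces elements of $L^2_\loc(D_2)$.

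For the forward implication, assume $(D_2,\psi)$ is $L^2$-optimal. Given a smooth strictly plurisubharmonic $\phi$ on $\mc^n$ and a $\dbar$-closed $f\in L^2_{(0,1)}(D_1,\phi+\psi)$, I view $f$ as a form on $D_2$, extended by zero on $E$. Then $\dbar f=0$ on $D_2\setminus E\subseteq D_1$, and Lemma \ref{lem L2negligible} promotes this to $\dbar f=0$ on $D_2$ in the distributional sense. The $L^2$-optimal property of $(D_2,\psi)$ then delivers $u\in L^2(D_2,\phi+\psi)$ with $\dbar u=f$ on $D_2$ and the optimal estimate, which restricts to $D_1$ with the same norms since $E$ has Lebesgue measure zero. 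Conversely, if $(D_1,\psi)$ is $L^2$-optimal and $f\in L^2_{(0,1)}(D_2,\phi+\psi)$ is $\dbar$-closed, I apply the hypothesis to $f|_{D_1}$ to obtain $u\in L^2(D_1,\phi+\psi)$ solving $\dbar u=f|_{D_1}$ with the optimal estimate. Extending $u$ by zero on $E$, the resulting $L^2_\loc$ function on $D_2$ satisfies $\dbar u=f$ on $D_2\setminus E$, so Lemma \ref{lem L2negligible} upgrades this to $\dbar u=f$ on all of $D_2$, with the weighted $L^2$ norms again matching.

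The only mildly delicate point is verifying the regularity hypotheses of Lemma \ref{lem L2negligible} on the larger domain $D_2$; this reduces to the observation that upper semi-continuity of $\psi$ forces the weight $e^{-\phi-\psi}$ to be locally bounded below on compacts of $D_2$, so the weighted-$L^2$ data and solutions descend to $L^2_\loc$ on $D_2$. Beyond that, the proposition is essentially a bookkeeping combination of pluripolar-removability of $\dbar$ with the fact that $E$ is Lebesgue-null.
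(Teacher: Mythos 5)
Your proof is correct and follows essentially the same route as the paper's: in both directions one uses that the closed pluripolar set $E$ is Lebesgue-null (so all weighted $L^2$ norms over $D_1$ and $D_2$ agree), that upper semi-continuity of $\psi$ makes the weighted data locally $L^2$, and Lemma \ref{lem L2negligible} to propagate $\dbar f=0$ (resp. $\dbar u=f$) across $E$. No substantive differences from the paper's argument.
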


\begin{proof}
$(\Rightarrow)$:
  Let $\phi$ be a smooth strictly plurisubharmonic function on $\mc^n$ and $f \in L^2_{(0,1)}(D_2,\phi+\psi)\cap\Ker(\dbar)$ satisfies
$$
\int_{D_2} \langle B_\phi^{-1} f, f\rangle e^{-\phi-\psi} d\lambda<+\infty.
$$
Since $D_1$ is $L^2$-optimal, then there exists $u\in L^2(D_1,\phi+\psi)$ such that $\dbar u=f$ on $D_1$ with the estimate
$$
\int_{D_1}|u|^2 e^{-\phi-\psi} d\lambda
\leq
\int_{D_1} \langle B_\phi^{-1} f, f\rangle e^{-\phi-\psi} d\lambda.
$$
Since $\psi$ an upper semi-continuous  function defined on $ D_2$, we obtain that $u$ and $f$ are both in $L^2_\loc$. Then by Lemma \ref{lem L2negligible}, $\dbar u=f$ can extend across the  closed pluripolar subset $E\subset D_2$ with the same $L^2$-estimate. Therefore, $(D_2,\psi)$ is $L^2$-optimal.

$(\Leftarrow)$: Let $\phi$ be a smooth strictly plurisubharmonic function on $\mc^n$ and $f \in L^2_{(0,1)}(D_1,\phi+\psi)\cap\Ker(\dbar)$ satisfies
$$
\int_{D_1} \langle B_\phi^{-1} f, f\rangle e^{-\phi-\psi} d\lambda<+\infty.
$$
Since $\psi$ an upper semi-continuous  function defined on $ D_2$, we obtain that $f$ is in $L^2_\loc$. Then by Lemma \ref{lem L2negligible}, $\dbar f=0$ can extend across the  closed pluripolar subset $E\subset D_2$ with the same $L^2$-estimate.

Since $D_2$ is $L^2$-optimal, then there exists $u\in L^2(D_2,\phi+\psi)$ such that $\dbar u=f$ on $D_2$ with the estimate
$$
\int_{D_2}|u|^2 e^{-\phi-\psi} d\lambda
\leq
\int_{D_2} \langle B_\phi^{-1} f, f\rangle e^{-\phi-\psi} d\lambda.
$$
Since $E$ is of zero measure and $D_1\subset D_2$, then $\dbar u=f$ on $D_1$ satisfying the same $L^2$-estimate. Therefore, $(D_1,\psi)$ is $L^2$-optimal.

\end{proof}

Then by Theorem \ref{thm stein} and Proposition \ref{prop l2 neg}, we can obtain that

\begin{cor}\label{cor L2neg}
	Let $D$ be a relatively compact $L^2$-optimal domain in a Stein manifold $X$.
	Assume that $\mathring{\overline{D}}\backslash D$ is pluripolar, then $\mathring{\overline{D}}$ is an $L^2$-domain of holomorphy.	
\end{cor}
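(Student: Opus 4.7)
The plan is to set $\widetilde{D} := \mathring{\overline{D}}$ and verify that $\widetilde{D}$ satisfies all the hypotheses of Theorem \ref{thm stein}, whence the conclusion is immediate. Two things need to be checked: (a) that $\widetilde{D}$ is a relatively compact domain in $X$ with null thin complement, and (b) that $\widetilde{D}$ is $L^2$-optimal.

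For (a) I would first observe the tautological chain of inclusions $D \subset \widetilde{D} \subset \overline{D}$. Taking closures gives $\overline{D} \subset \overline{\widetilde{D}} \subset \overline{\,\overline{D}\,} = \overline{D}$, so $\overline{\widetilde{D}} = \overline{D}$. Since $\overline{D}$ is compact (by relative compactness of $D$), $\widetilde{D}$ is relatively compact as well; and since $\overline{\widetilde{D}} = \overline{D}$ we also have $\mathring{\overline{\widetilde{D}}} = \mathring{\overline{D}} = \widetilde{D}$, which is the null thin complement condition. Connectedness of $\widetilde{D}$ follows because $D$ is connected and dense in $\widetilde{D}$: the set $\widetilde{D}\setminus D$ is pluripolar, hence has empty interior.

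For (b) I would propagate the $L^2$-optimal property from $D$ to $\widetilde{D}$ by the negligible-set argument of Proposition \ref{prop l2 neg}. Set $E := \widetilde{D}\setminus D$; this is pluripolar by hypothesis, and it is closed in $\widetilde{D}$ because $D$ is open. The proposition as stated is for domains in $\mc^n$, but its proof uses only two ingredients: the definition of the $L^2$-optimal condition (which makes sense on any Stein manifold, cf.\ the definition preceding Lemma \ref{lem:weak-basic-inequality'}) and Lemma \ref{lem L2negligible}, whose statement is local and whose proof carries over verbatim to any complex manifold. Granting this routine transfer, $\widetilde{D}$ is $L^2$-optimal as soon as $D$ is. Combining (a) and (b), Theorem \ref{thm stein} applies to $\widetilde{D}$ and yields that $\widetilde{D}$ is an $L^2$-domain of holomorphy.

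There is no real obstacle in this argument; the only point requiring care is the off-hand extension of Proposition \ref{prop l2 neg} from $\mc^n$ to a Stein manifold. I would handle this by spelling out, in one short paragraph, that for any smooth strictly plurisubharmonic $\phi$ and any K\"ahler metric $\omega$ on $X$, a solution $u$ of $\dbar u = f$ on $D$ with the optimal estimate defines an $L^2_{\mathrm{loc}}$ $(n,0)$-form on $\widetilde D$ whose $\dbar$ equals $f$ off the pluripolar closed set $E$; Lemma \ref{lem L2negligible} then pushes this identity across $E$ with the same $L^2$-bound, which is exactly the $L^2$-optimal estimate on $\widetilde D$. This verbatim repetition of the $(\Rightarrow)$ direction of Proposition \ref{prop l2 neg} is the only step that needs to be written out in any detail.
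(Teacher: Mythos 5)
Your proposal is correct and follows essentially the same route as the paper, which derives the corollary directly from Theorem \ref{thm stein} combined with (the Stein-manifold version of) Proposition \ref{prop l2 neg}; your verification that $\mathring{\overline{D}}$ is a relatively compact domain with null thin complement and your remark on transferring Proposition \ref{prop l2 neg} to manifolds via the local Lemma \ref{lem L2negligible} simply make explicit what the paper leaves implicit.
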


Recall that there is a result on the boundary of complete K\"ahler domains obtained by 
Diederich-Forn\ae ss as follows.

\begin{thm}[\cite{D-F}]\label{thm df}
	Let $A$ be a closed real analytic subset of a pseudoconvex domain $D \subset \mc^n$. Suppose that $\textup{codim}\;A\ge3$, then $A$ is complex analytic if and only if $D \backslash A $ admits a complete K\"ahler metric.
\end{thm}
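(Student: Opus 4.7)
The plan is to handle the two directions separately, since they are qualitatively very different.

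For the forward direction ($A$ complex analytic $\Rightarrow$ $D\setminus A$ complete K\"ahler), I would use a standard potential-theoretic construction. Cover a neighborhood of $A$ in $D$ by open sets $U_\alpha$ on each of which $A$ is the common zero set of finitely many holomorphic functions $f_1^{(\alpha)},\ldots,f_{k_\alpha}^{(\alpha)}$ (after rescaling so $\|f^{(\alpha)}\|<1$). On $U_\alpha\setminus A$ set $\eta_\alpha=-\log(-\log\|f^{(\alpha)}\|^2)$; a direct $i\ddbar$ computation shows $\eta_\alpha\to+\infty$ near $A$ and $i\ddbar\eta_\alpha\geq c\,\pa\eta_\alpha\wedge\bp\eta_\alpha$ (so the associated length of any curve approaching $A$ is infinite). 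A partition of unity together with a background K\"ahler form coming from the pseudoconvexity of $D$ (for instance, the restriction of $i\ddbar\|z\|^2$) patches these local pieces into a global complete K\"ahler metric on $D\setminus A$; standard tricks (taking large convex combinations) absorb the error terms from cutoffs.

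For the reverse direction, which is the deep part, the plan is to exploit the $L^2$-optimal machinery of the present paper. By Theorem \ref{thml2existence}, the existence of a complete K\"ahler metric on $D\setminus A$ makes $D\setminus A$ an $L^2$-optimal domain. I would then try to use Theorem \ref{thm:skoda-division'} locally: fix $p\in A$ and a small Stein neighborhood $U\Subset D$ of $p$; after shrinking, pick real analytic generators $\rho_1,\ldots,\rho_N$ of the ideal of $A$ in $U$ and complexify them to holomorphic functions $\tilde\rho_j(z,\bar w)$ in $2n$ variables on a neighborhood of the diagonal in $U\times\overline U$. The restriction $\tilde\rho_j(z,\bar z^0)$ for $z^0$ fixed is a local holomorphic function on $U$, and the intersection of its zero locus with $A$ controls the CR-structure of $A$ at $z^0$. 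Applying the local division/extension consequences of $L^2$-optimality on $U\setminus A$, one aims to construct sufficiently many holomorphic functions on $U$ that vanish on $A$ near $p$, which will show that $A$ is locally a complex analytic subvariety.

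The main obstacle is precisely the bridge between the real analytic and the complex analytic structures, and this is where the codimension hypothesis $\operatorname{codim}_{\mathbb{R}}A\geq 3$ must enter. The codimension condition ensures that $D\setminus A$ remains locally connected (and, in fact, locally simply connected in low degrees) so that holomorphic objects produced by the $L^2$ theory on $D\setminus A$ admit analytic continuation across $A$ in the Hartogs sense; it is also the right hypothesis to rule out totally real components of $A$ of maximal real dimension $2n-2$ which could be locally cut out by a single real equation without being complex analytic. The most delicate technical step will be showing that the complex tangent cone of $A$ at each point already determines the real analytic equations of $A$: equivalently, that the \emph{complexified} defining ideal restricts to the defining ideal of $A$, rather than to a larger real analytic set. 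This comparison between the holomorphic division obtained from $L^2$-optimality and the real analytic stratification of $A$ is where I expect to spend most of the effort, and it is the substance of the original Diederich--Forn\ae ss argument.
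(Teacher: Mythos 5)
This statement is quoted by the paper from Diederich--Forn\ae ss \cite{D-F} as background; the paper contains no proof of it, so there is nothing internal to compare your argument against, and it has to stand on its own. It does not. The forward direction (complex analytic $\Rightarrow$ complete K\"ahler complement) is indeed the standard Grauert-type construction you describe, with one slip: the restriction of $i\ddbar\|z\|^2$ is not complete near $\pa D$, so you must instead invoke a genuinely complete K\"ahler metric on the pseudoconvex domain $D$ (e.g.\ one built from a strictly plurisubharmonic exhaustion) before patching in the local potentials $-\log(-\log\|f^{(\alpha)}\|^2)$. That direction is repairable and is in any case the easy half, valid without any codimension hypothesis.

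The reverse direction is where the theorem actually lives, and your proposal does not prove it. Passing from ``$D\setminus A$ complete K\"ahler'' to ``$D\setminus A$ is $L^2$-optimal'' via Theorem \ref{thml2existence} is fine, but Theorem \ref{thm:skoda-division'} then only lets you divide by tuples $g$ with $\inf|g|^2>0$, i.e.\ by functions that do \emph{not} vanish on $\overline{D\setminus A}\supset A$; it gives no mechanism for manufacturing holomorphic functions on a full neighborhood of a point of $A$ that vanish on $A$. The actual Diederich--Forn\ae ss argument runs the $L^2$ existence theorem on the complete K\"ahler manifold $D\setminus A$ with singular weights adapted to the complexification of the real analytic defining functions, together with a careful analysis of the stratification of $A$; your sketch names this step (``the complexified defining ideal restricts to the defining ideal of $A$'') and then explicitly defers it to ``the substance of the original Diederich--Forn\ae ss argument.'' That is a concession that the proof is absent precisely at the crux. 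Your explanation of the role of $\textup{codim}\,A\geq3$ is also off: local connectivity of the complement already holds in real codimension $2$, and a set of real dimension $2n-2$ has codimension $2$ and is not cut out by a single real equation; the hypothesis is really what forces any complex analytic candidate to have complex codimension at least $2$ and is what makes the extension and vanishing arguments work, and this must be used quantitatively, not topologically.
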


Then we can give an example that an $L^2$-optimal domain is not  complete K\"ahler by Theorem \ref{thm df} and Proposition \ref{prop l2 neg}.
\begin{exm}\label{exa}
	Let $A$ be a closed subset of $\mc$.
	Assume that $A$ is real analytic but not complex analytic and
  $$
   A \subset \Omega \subset \mc^n,
   $$
where $\Omega$ is a pseudoconvex domain in $\mc^n$ and $n \geq 4$.
Since $A \subset \mc$, then $A$ is closed pluripolar and $\text{codim} A \geq 3$.
Therefore, it follows from Theorem \ref{thm df} and Proposition \ref{prop l2 neg} that $\Omega \backslash A$ is $L^2$-optimal but not complete K\"ahler.
\end{exm}


\begin{thebibliography}{10}

	\bibitem{Andreotti-Vesentini}
	A.~Andreotti, E.~Vesentini,
	\newblock Carleman estimates for the Laplace-Beltrami equation on complex manifolds.
	\newblock {\em Inst. Hautes \'Etudes Sci. Publ. Math.}, 25:81--130 (1965)

    \bibitem{BP08}
    B.~Berndtsson, M.~P\u{a}un,
    \newblock Bergman kernels and the pseudoeffectivity of relative canonical
    bundles.
    \newblock {\em Duke Math. J.}, 145(2):341--378 (2008)


	\bibitem{Bremermann}
	H.~Bremermann,
	\newblock \"{U}ber die \"{A}quivalenz der pseudokonvexen {G}ebiete und der
              {H}olomorphiegebiete im {R}aum von {$n$} komplexen
              {V}er\"{a}nderlichen.
	\newblock {\em Math. Ann.}, 128:63--91 (1954)
	
	\bibitem{CWW} B.~Chen, J.~Wu, X.~Wang,
	\newblock Ohsawa-{T}akegoshi type theorem and extension of plurisubharmonic functions.
	\newblock {\em Math. Ann.}, 362:305--319 (2015)


\bibitem{Dem-82} J.~P.~Demailly,
\newblock Estimations {$L^{2}$} pour l'op\'{e}rateur {$\bar \partial $}
              d'un fibr\'{e} vectoriel holomorphe semi-positif au-dessus d'une
              vari\'{e}t\'{e} k\"{a}hl\'{e}rienne compl\`ete.
             \newblock {\em Ann. Sci. \'Ecole Norm. Sup. (4)},  15(3):457--511 (1982)

	\bibitem{Demaily} J.~P.~Demailly,
		 	\newblock \emph{Complex analytic and differential geometry}.
		 	\newblock Electric book, available on the author's homepage.
			
	
			
	\bibitem{DNW21} F.~Deng, J.~Ning, Z.~Wang,
	\newblock Characterizations of plurisubharmonic functions.
	\newblock {\em Sci China Math}, 64:1959--1970 (2021)

    \bibitem{DNWZ22}
	F.~Deng, J.~Ning, Z.~Wang, X.~Zhou,
	\newblock Positivity of holomorphic vector bundles in terms of $L^p$ estimates for $\dbar$.
    \newblock {\em Math. Ann.}, 385:575--607 (2023)

    \bibitem{DWZZ} F.~Deng, Z.~Wang, L.~Zhang, X.~Zhou,
    \newblock New characterizations of plurisubharmonic functions and positivity of direct image sheaves. {\em arXiv:1809.10371},
(To appear in {\em Amer. J.  Math.}).

    \bibitem{Deng-Zhang}
    F.~Deng, X.~Zhang,
	\newblock Characterizations of Curvature positivity of
	Riemannian vector bundles and convexity or pseudoconvexity of bounded domains in $\mathbb{R}^n$ or $\mathbb{C}^n$ in terms of $L^2$-estimate of $d$ of $\bar\partial$ equation.
	\newblock {\em J. Funct. Anal.}, 281(9):109184, 30.pp (2021)

    \bibitem{D-F} K.~Diederich, J.~E.~Forn\ae ss,
    \newblock Thin complements of complete K\"ahler domains.
    \newblock {\em Math. Ann.}, 259(3):331--341 (1982)	


	\bibitem{Diederich-Pflug} K.~Diederich, P.~Pflug,
	\newblock \"Uber Gebiete mit vollst\"andiger K\"ahlermetrik.
	\newblock {\em Math. Ann.}, 257(2):191--198 (1981)


	\bibitem{Docquier-Grauert} F.~Docquier, H.~Grauert,
	\newblock Levisches Problem und Rungescher Satz f\"ur Teilgebiete Steinscher Mannigfaltigkeiten.
	\newblock{\em Math. Ann.}, 140:94--123 (1960)
	


    \bibitem{Grauert}
   H.~Grauert, \newblock Charakterisierung der Holomorphiegebiete durch die vollständige Kählersche Metrik. \newblock{\em Math. Ann.} 131:38--75 (1956)



	\bibitem{Hor65}
	L.~H\"ormander,
	\newblock {$L^{2}$} estimates and existence theorems for the {$\bar \partial
		$}\ operator.
	\newblock {\em Acta Math.}, 113:89--152 (1965)

\bibitem{In20}
    T.~Inayama,
    \newblock Nakano positivity of singular Hermitian metrics and vanishing
    theorems of {D}emailly-{N}adel-{N}akano type.
    \newblock {\em Algebr. Geom.}, 9(1):69--92 (2022)

\bibitem{kobayashi}
S. Kobayashi,	
Differential geometry of complex vector bundles,
Publications of the Mathematical Society of Japan, 15. Kan\^{o} Memorial Lectures, 5. Princeton University Press, Princeton (1987)

	\bibitem{Kohn63} J.~J.~Kohn,
	\newblock Harmonic integrals on strongly pseudo-convex manifolds. I. II.
	\newblock {\em Ann. of Math. (2)}, 78:112--148 (1963); 79:450--472 (1964)

    \bibitem{Laufer} H.~B.~Laufer,
    \newblock On sheaf cohomology and envelopes of holomorphy.
   \newblock {\em Ann. of Math. (2)}, 84:102--118 (1966)

    \bibitem{LYZ21}
    Z.~Liu, B.~Xiao, H.~Yang, X.~Zhou,
	 \newblock Multiplier submodule sheaves and a problem of Lempert.
	\newblock \url{arXiv:2111.13452v2}.

	\bibitem{Norguet}  F.~Norguet,
	\newblock Sur les domaines d'holomorphie des fonctions uniformes de plusieurs variables complexes.
	\newblock {\em Bull. Soc. Math. France}, 82:137--159 (1954)
	
	
		\bibitem{Oh-80} T.~Ohsawa,
         \newblock On complete K\"ahler domains with $C^1$-boundary.
         \newblock {\em Publ. Res. Inst. Math. Sci.}, 16(3):929--940 (1980)


         \bibitem{Oh-02} T.~Ohsawa,
         \newblock A precise $L^2$ division theorem.
         \newblock {\em Complex geometry (Göttingen, 2000), Springer, Berlin}, 185--191 (2002)

	\bibitem{Oka} K.~Oka,
	\newblock Sur les fonctions de plusieurs variables. IX. Domaines finis sans point critique int\'erieur.
	\newblock {\em Jpn. J. Math.}, 23:97--155 (1953)
	
     \bibitem{PT18}
    M.~P\u{a}un, S.~Takayama,
    \newblock Positivity of twisted relative pluricanonical bundles and their
    direct images.
    \newblock {\em J. Algebraic Geom.}, 27(2):211--272 (2018)

    \bibitem{PW} P.~Pflug, W.~Zwonek,
    \newblock $L^2_{h}$-domains of holomorphy and the Bergman kernel.
    \newblock {\em Studia Math.}, 151: 99--108 (2002)

    \bibitem{Rau15}
    H.~Raufi,
    \newblock Singular Hermitian metrics on holomorphic vector bundles.
    \newblock {\em Ark. Mat.}, 53(2):359--382 (2015)

    \bibitem{Shaw}  C.~Laurent-Thi\'{e}baut, M.~Shaw,
\newblock On the {H}ausdorff property of some {D}olbeault cohomology
              groups.
              \newblock {\em Math. Z.}, 274(3-4):1165--1176 (2013)

	\bibitem{ShawMeichi} M.~Shaw,
    \newblock Global solvability and regularity for $\dbar$ on an annulus between two weakly pseudoconvex domains.
    \newblock {\em Trans. Amer. Math. Soc.}, 291(1):255--267 (1985)


	\bibitem{Skoda} H.~Skoda,
	\newblock Application des techniques $L^2$
	\`a la th\`eorie des id\`eaux d'une alg\`ebre de
	fonctions holomorphes avec poids.
	\newblock{\em Ann. Sci. \'Ecole Norm. Sup. (4)}
	 5:545--579 (1972)
	
	 \bibitem{Skoda-2} H.~Skoda,
       \newblock Morphismes surjectifs de fibr\'es vectoriels semi-positifs.
       \newblock {\em Ann. Sci. \'Ecole Norm. Sup. (4)} 11(4):577--611 (1978)

\bibitem{Varolin} D.~Varolin,
\newblock Division theorems and twisted complexes.
\newblock {\em Math. Z.}, 259(1):1--20 (2008)

\bibitem{Yasuoka}  T.~Yasuoka,
\newblock A domain exhausted by complete {K}\"{a}hler domains.
\newblock {\em Kobe J. Math.}, 2(1):57--64 (1985)

	\end{thebibliography}
\end{document}